\numberwithin{equation}{section}
\theoremstyle{plain}
\newtheorem{thm}{Theorem}[section]
\newtheorem{cor}{Corollary}[section]
\newtheorem{lem}[cor]{Lemma}
\newtheorem{prop}[cor]{Proposition}
\theoremstyle{remark}
\newtheorem{rem}{Remark}
\newcommand{\eps}{\varepsilon}
\newcommand{\N}{\mathbb{N}}
\newcommand{\E}{\mathbb{E}}
\newcommand{\R}{\mathbb{R}}
\newcommand{\bi}{\begin{itemize}}
\newcommand{\ei}{\end{itemize}}
\newcommand{\bd}{\begin{description}}
\newcommand{\ed}{\end{description}}
\newcommand{\beq}{\begin{equation}}
\newcommand{\eeq}{\end{equation}}
\newcommand{\beqn}{\begin{eqnarray}}
\newcommand{\eeqn}{\end{eqnarray}}
\newcommand{\beqna}{\begin{eqnarray*}}
\newcommand{\eeqna}{\end{eqnarray*}}
\def\P{{\mathbb P}}
\newcommand{\Var}{{\rm Var}}
\newcommand{\s}{{\big|}}
\newcommand{\bY}{{ \mathbf Y}}
\newcommand{\ind}{\mathbf{1}}
\newcommand{\lev}{ \mathcal L}      
\newcommand{\KRF}{Kac-Rice formula}
\newcommand{\grass}[2]{\mathbb{G}_{#1,#2}}
\def \2{\hbox{I}\!\hbox{I}}
\def \cs{\mathtt{(const)} }
\newcommand{\eq}{\begin{equation}}
\newcommand{\qe}{\end{equation}}
\newcommand{\crit}{{\rm Crt}}
\begin{document}

\begin{frontmatter}
\title{On a general Kac-Rice formula for the measure of a level set}
\runtitle{Kac-Rice formula for  level set}

\begin{aug}

\author[A]{\fnms{Diego}~\snm{Armentano}\ead[label=e1]{diego.armentano@fcea.edu.uy}},
\author[B]{\fnms{Jean-Marc }~\snm{Aza\"is}\ead[label=e2]{jean-marc.azais@math.univ-toulouse.fr}}
\and
\author[C]{\fnms{José Rafael}~\snm{León}\ead[label=e3]{rlramos@fing.edu.uy}}
\address[A]{Instituto de Estadística, Facultad de Ciencias Económicas y de Administración,
Universidad de la Rep\'ublica, 11200 Montevideo, Uruguay\printead[presep={,\ }]{e1}}

\address[B]{Institut de Math\'ematiques de Toulouse, IMT, Université de Toulouse,
  France\printead[presep={,\ }]{e2}}
\address[C]{Insituto de Matemática y Estadística Rafael Laguardia, Facultad
de Ingeniería, Universidad de la Rep\'ublica, 11300 Montevideo,
Uruguay\printead[presep={,\ }]{e3}}
\end{aug}

\begin{abstract}
Let $X(\cdot) $ be a random field from $\R^D$ to $\R^d$, where $D\geq d$. We study 
  the level set $X^{-1}( u) $, where $u \in \R^d$. Specifically, we provide a weak
  condition for this level set to be rectifiable.  Next, we establish
  the Kac-Rice formula to compute the  $(D-d)$-dimensional Hausdorff measure. Our results
  extend previous work,   particularly in  the non-Gaussian case where we
  obtain a very general result. We
  conclude with several extensions and examples of applications, including functions of Gaussian random fields,  zeros of the likelihood functions, 
  gravitational  microlensing, and shot-noise.

\end{abstract}

\begin{keyword}[class=MSC]
\kwd[Primary ]{60G60}
\kwd[; secondary ]{60G15,60G10,49Q15,52A22}
\end{keyword}

\begin{keyword}
  \kwd{Random fields}
  \kwd{Kac-Rice formula} 
  \kwd{Bulinskaya lemma}
  \kwd{Integral geometry}
  \kwd{Gaussian fields}
  \kwd{Shot-noise} 
  \kwd{likelihood process}
  \kwd{Stochastic micro-lensing}
\end{keyword}

\end{frontmatter}


\section{Introduction and motivation}
\subsection{History and present  of Kac-Rice formula}
The study of level sets began with the pioneering works of Kac
\cite{Kac} and Rice \cite{Rice} for random processes $ \R \to \R$.  Ito \cite{Ito} 
provided the first   rigorous  general proof of what has become  known as the
\emph{\KRF}.  A  first  comprehensive monograph on the subject was published by
Cramer and Leadbetter \cite{CL}.  At that  time, only
stochastic processes from $\R$ to $\R$ were considered.  The applications were
mainly  in engineering \cite {CL} or oceanography \cite{longuet0}.

In 1957, Longuet-Higgins \cite{longuet} published a paper on a model of
the sea surface using a Gaussian random field. In this paper, he derived
a \KRF~ for the expectation of the length of level set of such a
field. His proof was based on an intuitive approach with very
interesting ideas. Another notable work is a paper by Brillinger
\cite{Brillinger}, where he studied the zeros of a system of random
equations and,  for the first time, used the celebrated area formula.
Using this formula, Brillinger established the \KRF~ for almost every
level (see Proposition \ref{ae level:D:d} below),  noting that,  as long
as both sides are continuous functions of the level,  equality for all
levels is satisfied. However, his conditions for ensuring this
continuity are very strong; see his Corollary 3.1.

The first convincing application of the \KRF~to the expectation of the number of
zeroes of  random fields $\R^d \to \R^d$ was given by Adler \cite{adler}.
In fact, Adler considered critical points which are the zeroes of the
derivatives.  Two different general proofs were later published in \cite{AT}  and \cite{azais2005}.

Another generalization is the case  where dimensions differ: that is,
random fields $\R^D \to \R^d$,  with $D>d$. In this case,  the level set is
generically  a set  of dimension $D-d$. Defining   and  computing the expectation  of its  measure are  difficult problems. The first
attempts seem to  date back to Wschebor  \cite{wschebor1982} and
Caba\~{n}a \cite{cabana}.  The subject was further developed  by
Kratz-Le\'on \cite{KL}, \cite[Th 15.9.4]{AT} considered  the
case of i.i.d. Gaussian components and a first general proof was given 
in Aza\"is-Wschebor \cite{AW-book}, with a detailed version in Berzin et
al. \cite{berzin}. All the cited  works, with only marginal exceptions, are
limited  to the computation of the moments  of the measure of the level
set.  The computation of other  geometric quantities remains largely
an unsolved problem. 

In recent decades,  new applications  of the \KRF ~ have emerged. 
High-dimensional statistics and machine learning have
introduced the problem of studying the landscape of random fields, specifically the
behavior of the number of critical points with a given index on a
high-dimensional sphere.  This problem is closely related to the Ising model
\cite{auffinger2013}.

The random wave model, introduced by Berry,  along with the well-known Berry
cancellation problem \cite{Be}, have  been extensively studied in
physics  and astronomy.  A comprehensive review of the progress in the
study of nodal sets of random fields is available in the recent paper by
Wigman \cite{Wigman}.

In the 20th century, the Gaussian model was the most widely used. This was not
only because it avoided technical complications, but also because it was one of the few
models under which precise calculations could be performed. 
This explains why  most of the manuals focused primarily on this
model and  some  straightforward generalizations, such as $\chi^2$ processes.

In recent years, non-Gaussian models have been developed, including
non-Gaussian likelihood models, though the shot-noise model remains one
of the most common. For applications of the \KRF~  in this
latter process, see, for instance, the work of Biermé and Desolneux \cite{Bier:De}.

The cited literature provides numerous conditions  that
lead to various proofs of the Kac-Rice formulas.  In this paper, our
goal is to offer simple proofs under conditions that are both easier to
verify and weaker than those commonly used.  This is illustrated by
numerous examples  given in Section \ref{sec:examples}. 

\section{Main results}

Let $X:T\to\R^d$ be a random field defined on an open set
$T\subset\R^D$.

Given a fixed value $u\in\R^d$, and a Borel subset $B\subset T$, 
 we are interested in the level set 
\begin{equation}\label{def:levu}
  \lev_u(B) := \{ t\in B : X(t) =u\}.
\end{equation}

\medskip

In this paper, we  assume that $D\geq d$ (under general conditions, if
$D<d$, $\lev_u(B)$ is almost surely (a.s.) empty). We further assume that the paths of $X(\cdot)$ are of
class $C^1$ a.s., and we denote its derivative by $X'(\cdot)$.

\medskip

 Irregular points of the level set $\lev_u(B)$ are defined
 as points $t$ in $\lev_u(B)$, where the derivative $X'(t)$ is not a
 surjective linear map, that is when the generalized Jacobian  (or normal Jacobian) 
\begin{equation}\label{def:deltat}
 \Delta(t) := \sqrt{\det(X'(t) X'(t) ^\top)}
 \end{equation}
 vanishes. Here $X'(t)^\top$ denotes the transpose of the derivative.
In particular, when $D=d$, we have $\Delta(t)=|\det X'(t)|$: the absolute value of the
determinant of the Jacobian matrix of $X$ at $t$.



\medskip

A classical result, given by Bullinskaya, for a  one-parameter random field
is as follows:
\begin{lem}[Bulinskaya\cite{bulinskaya}]
 Let  $X: [a,b] \to \R$  be a random process  with paths of
 class $C^1$.  Let  $u\in \R$, and  assume that 
\begin{equation*}
p_{X(t)}(v)\leq C \mbox{ for all }t\in T \mbox{ and }v \mbox{ in some neighbourhood }V_u \mbox{ of }u. 
\end{equation*}
Then 
\begin{equation}\label{eq:realbuli}
\P \left\{ \exists t\in T,\, X(t)=u,\,X'(t)=0\right\}  =0.
 \end{equation}
\end{lem}

 The first main result  concerns the  Bulinskaya-type lemma
 concerning the regularity of the level set $\lev_u$ for $C^1$ random
 fields (see Proposition \ref{p:buli}).
Note that  this condition is required  in the proof of  the \KRF.  

Let us first consider the case $D=d$,  where the Bulinskaya
condition states that there are no  irregular points a.s. 
In classical results, certain assumptions are required to establish this condition, such as those in 
\cite[Lemma (11.2.1)]{AT} or \cite [Proposition 6.5]{AW-book}.
The present proof, based on local time, allows us to derive this
condition without additional assumptions. Moreover, even in the case
of very classical Gaussian random fields, our condition for the \KRF~ is weaker. In our opinion, the proof is also simpler.

Let us now consider the case where $D>d$.
Under additional smoothness conditions on the path of $X$, 
Morse-Sard type arguments and implicit function theorem
imply  that $\lev_u$ is a manifold for almost every level $u\in\R^d$.
However, our main interest lies in a specific value  of $u$, namely $u=0$.


It is sometimes stated  that if the paths of $X$ are $C^1$,
 the level set is always a manifold. However, this is only true under
 additional regularity conditions. The Morse-Sard theorem guarantees
 that the level set is a manifold for almost every level, provided the
 random field is sufficiently differentiable. This may have been the
 reasoning behind the assertion in the work of Ibragimov–Zaporozhets
 \cite{IZ}. Nevertheless, this statement does not hold in full generality, as
 demonstrated by the following counterexample:

Let $D=2$ $d=1$ and, for example, 
$$ 
X(t_1,t_2)  =  \xi ( t_1^2 - t_2^2),\qquad (t_1,t_2)\in\R^2,
$$
where $\xi$ is standard normal. Take $u=0$, the nodal set is a.s.  
$$
\lev_0 = \{ |t_1|= |t_2|\}\subset\R^2, 
$$ which is obviously not a manifold. \medskip

%
%
%
%
%

\medskip

If we assume the \emph{strong Bulinskaya condition:}
\begin{equation} \label{e:nocritical}
\P \left\{ \exists t\in T,X(t)=u,\Delta(t)=0\right\}  =0,
\end{equation}
then the  implicit function theorem implies that the level set $\lev_u$ is
a.s. a manifold.

Our contribution is to introduce the \emph{weak Bulinskaya condition}
\begin{equation} \label{e:buli:faible}
\sigma_{D-d}\{t\in T :\; X(t) =u, \, \Delta(t) =0 \} =0, \mbox{ a.s.}
\end{equation} 
where $\sigma_\ell$ denotes the $\ell$-dimensional Hausdorff measure on $\R^D$. Moreover,  we  provide weak sufficient conditions for \eqref{e:buli:faible} (Proposition  \ref{p:buli}).

(Note that $\sigma_0$ coincides with the counting measure; therefore,  conditions \eqref{e:nocritical} and
\eqref{e:buli:faible} are equivalent in the case $D=d$.)

Under condition \eqref{e:buli:faible}, we prove (as shown in Section
\ref{sec:buli}), that the level set $\lev_u$ is almost surely the union
of a $C^1$ manifold and a negligible set, and is therefore rectifiable.
\medskip

When $D>d$, classical proofs of \KRF~ (see \cite{AW-book}) require  the
strong condition \eqref{e:nocritical}. However, by using the Crofton
formula for rectifiable sets, we can prove the result under the  weaker
condition \eqref{e:buli:faible}, and once again, the local time argument
allows us to establish this  almost for free.
The underlying idea is as follows. 
Let $X(\cdot) $ be a ``nice'' random field. Then \eqref{e:buli:faible}
must hold,  because otherwise, the random field ``would spend too much
time in the neighborhood of $u$''. This is quantify by the
\emph{local time}: the density of the occupation measure  at $u$.  

This is the object of the next proposition.

\begin{prop}[Bulinskaya-type Lemma]\label{p:buli}
  Let $T\subset\R^D$  be an open  set and let  
     $X: T \to \R^{d}$  be a $C^1$ random field.  Let  $u\in \R^{d}$.
     Assume that the density $p_{X(t)}$ of $X(t)$ satisfies
  \begin{equation}\label{h:lt}
    \int_T p_{X(t)}(v)\,dt<C \mbox{ for all } v\mbox{ in some neighbourhood }V_u \mbox{ of }u. 
 \end{equation}
Then 
\begin{equation}
  \sigma_{D-d} \left(\{t\in T :\; X(t) =u, \, \Delta(t) =0 \}\right)
  =0,\;
  \mbox{a.s.}
 \end{equation}
\end{prop}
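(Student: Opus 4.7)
My plan is to use the ``local time'' argument sketched in the introduction. Reducing to a compact $K\subset T$ (which suffices by exhaustion), I introduce the approximate occupation density
\[
L_\epsilon \;:=\; \frac{1}{\vol B(u,\epsilon)}\int_K \mathbf{1}_{\{X(t)\in B(u,\epsilon)\}}\,dt,\qquad 0<\epsilon<\mathrm{dist}(u,V_u^c).
\]
Fubini together with the density bound gives $\E[L_\epsilon]\leq C|K|$ uniformly in $\epsilon$, so Fatou's lemma yields $\liminf_{\epsilon\to 0}L_\epsilon<\infty$ almost surely.

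To derive a contradiction, suppose $\pr\{\sigma_{D-d}(S)>0\}>0$ for $S:=X^{-1}(u)\cap K\cap A$. Stratifying by rank, some $r^\ast\in\{0,\ldots,d-1\}$ gives $\sigma_{D-d}(S^\ast)>0$, where $S^\ast:=\{s\in S:\operatorname{rank}X'(s)=r^\ast\}$. For each small $\rho>0$, take a maximal $\rho$-separated family $\{s_i\}_{i=1}^N\subset S^\ast$; the packing form of the Hausdorff--Minkowski inequality gives $N\geq c\,\sigma_{D-d}(S^\ast)/\rho^{D-d}$. At each $s_i$ form the anisotropic ``box''
\[
Q_i\;:=\;s_i+\bigl\{y+w:\,y\in\ker X'(s_i),\,|y|\leq\rho/4,\,w\in(\ker X'(s_i))^\perp,\,|w|\leq\omega(\rho)\rho\bigr\},
\]
where $\omega$ denotes the modulus of continuity of $X'$ on $K$. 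The $Q_i$'s lie in $B(s_i,\rho/2)$ and are mutually disjoint; each has $D$-volume of order $\omega(\rho)^{r^\ast}\rho^D$; and a first-order Taylor expansion using $X(s_i)=u$ and $X'(s_i)y=0$ puts each $Q_i$ inside $\{t:|X(t)-u|\leq(2+M)\omega(\rho)\rho\}$, with $M:=\sup_K\|X'\|<\infty$ a.s. Setting $\epsilon(\rho):=(2+M)\omega(\rho)\rho$ and summing gives $L_{\epsilon(\rho)}\gtrsim\sigma_{D-d}(S^\ast)\,\omega(\rho)^{r^\ast-d}$, which tends to $+\infty$ as $\rho\to 0$ since $r^\ast<d$ and $\omega(\rho)\to 0$. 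The monotonicity of $\epsilon\mapsto\int_K\mathbf{1}_{\{X\in B(u,\epsilon)\}}\,dt$ promotes this subsequential blow-up to $\liminf_{\epsilon\to 0}L_\epsilon=+\infty$ on the bad event, contradicting the first step.

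The main obstacle is this geometric estimate for a set $S$ with no a priori rectifiability. The lower bound on $N$ is supplied by the general Hausdorff--Minkowski packing inequality; the novelty is the anisotropic box, whose $r^\ast$ ``range-direction'' sides have length $\omega(\rho)\rho\ll\rho$, and it is this anisotropy that generates the divergent factor $\omega(\rho)^{r^\ast-d}$ from rank-deficiency alone. The random constants $M$ and $\omega$ are handled by restricting to events $\{M\leq M_0\}$ exhausting $\Omega$ and using the pathwise uniform continuity of $X'$ on $K$.
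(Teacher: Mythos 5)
Your proof is correct and follows essentially the same route as the paper's: the Fubini--Fatou bound on the occupation density, a packing of the irregular level set extracted from the Hausdorff premeasure, and anisotropic boxes aligned with $\ker X'(s_i)$ whose volume forces the occupation density to blow up like $\omega(\rho)^{r^\ast-d}$. If anything, you are slightly more explicit than the paper on two points it leaves implicit, namely the stratification by the rank $r^\ast$ before packing, and the use of monotonicity of $\epsilon\mapsto\sigma_D\left(\{|X-u|\le\epsilon\}\right)$ to upgrade blow-up along the subsequence $\epsilon(\rho)$ to blow-up of the full $\liminf$.
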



Proposition \ref{p:buli} significantly extends existing results
 (cf. Adler-Taylor \cite[Lemma 11.2.11]{AT}, Aza\"is-Wschebor \cite[Proposition 6.11]{AW-book}, and
  Nazarov-Sodin \cite[Lemma 6]{NaSo}.)
  Its proof is given in Section
 \ref{sec:buli}, where the local time is used.

 When $D>d$, since the regular part  of the level set  is a $C^1$
 manifold of dimension $D-d$,   it follows that  the level set is  a.s.
 a  $(D-d)$-rectifiable set.  In particular, its $(D-d)$-dimensional
 Hausdorff measure  coincides with the associated geometric measure of
 the regular part of $\lev_u$, i.e., the Riemaniann measure induced by
 the inherited Riemannian structure from $\R^D$ (see Morgan
 \cite{Morgan}, for example).

\bigskip

The next main result of this paper is a new version of the \KRF~  that 
extends to non-Gaussian random fields.  
While there are existing formulae for non-Gaussian processes in the
literature, such as \cite{Az}, \cite{Bier:De}, these typically rely on ad hoc proofs.
In the case of random fields, when $D=d$, Adler-Taylor \cite[Theorem
11.2.1]{AT} requires numerous conditions. The following theorem provides general results in the
non-Gaussian case.

 \begin{thm}[Rice formula for the expectation] \label{t:krf:dD}

Let $X : T  \to \R^d$ be a random field, $T$ an open subset of
   $\R^D$, ($D\geq d$). 

  Assume that:
\begin{enumerate}
  \item[(i)]  The sample paths of $X(\cdot)$  are a.s.  $C^{1}$,
 \item[(ii)] for each $t\in T$, $X(t)$ admits  a continuous density
   $p_{X(t)}(v)$, which is bounded uniformly  in $v \in \R^d$ and $t$ in any
    compact subset of $T$.
  \item[(iii)]  For {\bf every} $v \in \R^d$, for every $t\in T$,  the    distribution  of $\{ X(s), s\in T\}$ conditional to  $X(t) =v$  is well defined as a probability
    and is continuous, as a function of $v$, with respect to the  $C^1$ topology. 
\end{enumerate}
Then, for every Borel set $B$ contained in $T$ and for every level
   $u$, one has

\begin{equation} \label{e:krf:dD}  
\E \left(\sigma_{D-d}\left(\lev_u(B) \right) \right) =\int_{B} \E\left( \Delta(t) \s X(t)=u\right)\,p_{X(t)}(u)dt. \end{equation}
\end{thm}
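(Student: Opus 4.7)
The strategy is to combine the coarea formula applied to $X|_B$ with an approximation of the Dirac mass at $u$, take expectations, and pass the approximation parameter to zero, using hypothesis (iii) to transfer continuity through the conditional expectation. By monotone convergence, I would first reduce to the case of $B$ with compact closure in $T$.

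By Proposition \ref{p:buli} (applied with the uniform bound of hypothesis (ii)), the weak Bulinskaya condition holds almost surely at $u$, so $\lev_u(B)$ is almost surely $(D-d)$-rectifiable. Let $\varphi_\eps(v)=\eps^{-d}\varphi(v/\eps)$ be a nonnegative smooth mollifier on $\R^d$ with $\int\varphi=1$ and compact support. Applying the coarea formula pathwise to the $C^1$ map $X|_B$ with integrand $g(t)=\varphi_\eps(X(t)-u)$ yields, almost surely,
\begin{equation*}
\int_B \varphi_\eps(X(t)-u)\,\Delta(t)\,dt \;=\; \int_{\R^d}\varphi_\eps(v-u)\,\sigma_{D-d}(\lev_v(B))\,dv.
\end{equation*}
Taking expectations, invoking Fubini, and conditioning on $X(t)=w$ recast the left-hand side as $\int_B\!\int_{\R^d}\varphi_\eps(w-u)\,\E[\Delta(t)\,|\,X(t)=w]\,p_{X(t)}(w)\,dw\,dt$.

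Passing $\eps\to 0$ on the left is routine: hypothesis (iii) implies that $w\mapsto\E[\Delta(t)\,|\,X(t)=w]$ is continuous at $w=u$ (since $\Delta(t)$ is a continuous functional of $X'(t)$ and the conditional law is continuous in the $C^1$ topology), and by (ii) so is $w\mapsto p_{X(t)}(w)$; the uniform bound in (ii) then allows a dominated-convergence argument to produce the target $\int_B \E[\Delta(t)\,|\,X(t)=u]\,p_{X(t)}(u)\,dt$, which is the right-hand side of \eqref{e:krf:dD}. For the right-hand side of the displayed coarea identity, on the almost sure event where the weak Bulinskaya condition holds at $u$, an implicit-function-theorem argument around each regular point of $\lev_u(B)$ gives that $v\mapsto\sigma_{D-d}(\lev_v(B))$ is continuous at $v=u$, so the convolution with $\varphi_\eps$ converges pathwise to $\sigma_{D-d}(\lev_u(B))$.

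The main obstacle will be exchanging this pathwise limit with the expectation, i.e.\ establishing uniform integrability of $\int_{\R^d}\varphi_\eps(v-u)\,\sigma_{D-d}(\lev_v(B))\,dv$ in $\eps$. My plan is a bootstrap through the coarea identity itself: that identity realises this random variable as bounded, in expectation and uniformly in $\eps$, by the quantity governing the left-hand side, whose convergence to a finite limit has already been established. This tightness permits us to pass the almost sure convergence through the expectation, and the resulting equality is precisely \eqref{e:krf:dD} at the prescribed level $u$.
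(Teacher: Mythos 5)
Your overall architecture (coarea formula, smoothing of the level, expectation, limit) is reasonable in outline, but three of the steps you describe as routine are exactly the points where the real difficulty lies, and as written each one has a genuine gap. First, hypothesis (iii) gives continuity of the conditional law of the path in the $C^1$ topology, i.e.\ convergence of $\E[F\mid X(t)=w]$ for \emph{bounded} continuous functionals $F$. The functional $\Delta(t)$ is continuous but unbounded, so your claim that $w\mapsto\E[\Delta(t)\mid X(t)=w]$ is continuous does not follow from (iii); indeed this quantity need not even be finite. The paper's proof introduces the truncation $G_n(\Delta(t))$ precisely to reduce to a bounded continuous functional, and only afterwards removes the truncation by monotone convergence. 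Second, your pathwise continuity of $v\mapsto\sigma_{D-d}(\lev_v(B))$ at $v=u$ is unjustified: Proposition \ref{p:buli} gives only the \emph{weak} Bulinskaya condition \eqref{e:buli:faible}, which permits a nonempty (merely $\sigma_{D-d}$-null) set of irregular points on $\lev_u$, and near such points the measure of $\lev_v$ for $v$ close to $u$ can jump; moreover roots on $\partial B$ cause the same failure. The implicit function theorem only controls neighbourhoods of regular interior points. The paper's factors $F_n(\Delta(s))$ and $F_n(\mathrm{dist}(s))$ exist to excise exactly these two obstructions before any continuity-in-$v$ argument is made (and, when $D>d$, the paper does not argue on $\sigma_{D-d}(\lev_v(B))$ directly at all: it first reduces to $D=d$ by slicing with the Crofton formula, Proposition \ref{prop:intgeo}, where the level set is finite).

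Third, the ``bootstrap'' for uniform integrability is circular. From the coarea identity you obtain that $\E$ of your right-hand side equals $\E$ of your left-hand side and that the latter converges to a finite limit $L$; combined with a.s.\ convergence, Fatou yields only $\E[\sigma_{D-d}(\lev_u(B))]\le L$. Convergence of expectations to a finite limit does not imply uniform integrability, so the reverse inequality does not follow from what you have established. The paper avoids this entirely: it proves that \emph{both} sides of the identity \eqref{e:ppu} for the bounded, tapered functional $Q^n_v(R)$ are continuous in $v$ (the left side via Proposition \ref{p:poly} on the stability of regular roots under $C^1$ perturbation, the right side via (ii) and (iii)), upgrades the a.e.-in-$v$ identity to every $v$, and only then lets $n\to\infty$ by \emph{monotone} convergence, for which no integrability is needed. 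To repair your argument you would need to reproduce some version of this truncation scheme; without it, the mollifier approach proves only the almost-every-level statement of Proposition \ref{ae level:D:d}, which the paper explicitly identifies as insufficient.
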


\begin{rem}\label{extension} 
  In full generality, the conditions of
Theorem \ref{t:krf:dD} can be ``localized'':  it is sufficient for them
  to hold for  $X (t) $ in some neighborhood of the considered level
$u$.
\end{rem}

 \begin{rem}
The proof of Theorem \ref{t:krf:dD} is  partially based on the method of
   Aza\"is-Wschebor \cite{AW-book} and utilizes tools from integral geometry,
   such  as \emph{Crofton formula} for the rectifiable set $\lev_u$.
   (See Section \ref{sec:proofmain} for the proof and preliminaries on
   integral geometry.) 
 \end{rem}

\begin{rem}
The   $C^1$ topology is detailed in Section \ref{sec:proofmain}.
 
\end{rem}
\begin{rem}
Another way of presenting \eqref{e:krf:dD} is to say that the map $B \to
\E \left(\sigma_{D-d}\left(\lev_u(B)\right)\right)$  defines a Borel
measure, whose density  is given by $\E\left( \Delta(t) \s
X(t)=u\right)\,p_{X(t)}(u)$.  
\end{rem}

 \medskip

When $D=d$,  the  level set $\lev_u$, under  the conditions of this
theorem, consists of isolated points. Then, $\sigma_{D-d}\left(\lev_u(B)
\right)=N_u(X,B)$, where
\begin{equation}\label{def:nr}
N_u(X,B):=\#\{t\in B:\, X(t)=u\}
\end{equation}
is the number of solutions in $B$.
Then, formula (\ref{e:krf:dD}) reads as follows:
\begin{equation}\label{eq:KRF}
\E \left(N_u(X,B) \right) =\int_{B} \E\left( |\det(X'(t)| \s X(t)=u\right) ~p_{X(t)}(u)dt.
\end{equation}

\medskip

Theorem \ref{t:krf:dD} implies the following important
result for the Gaussian case. The key point  is that very few conditions are required. 

\begin{thm} \label{t:krf:gauss:dD}
Let $X : T  \to \R^d$ be a  Gaussian random field, $T$ an open subset of
$\R^D$, ($D\geq d$),  satisfying the following: 
\begin{enumerate}
 \item[(i)]  The sample paths of $X(\cdot)$  are a.s.  $C^{1}$;
\item[(ii)] for each $t\in T$, $X(t)$ has a positive definite
 variance-covariance matrix. 
\end{enumerate}

Then, \eqref{e:krf:dD} holds for every $u\in\R^d$. 
In addition, if $B$ is compact, both sides of \eqref{e:krf:dD} are finite and, consequently,
the measure $B\mapsto\E
\left(\sigma_{D-d}\left(\lev_u(B)\right)\right)$ is Radon.
 \end{thm}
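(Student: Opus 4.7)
The plan is to reduce Theorem~\ref{t:krf:gauss:dD} to an application of Theorem~\ref{t:krf:dD} by verifying its three hypotheses in the Gaussian setting. Condition (i) is part of the assumption, so the real work is checking (ii), (iii), and then establishing finiteness on compact $B$. Throughout, set $m(t)=\E[X(t)]$, $\Sigma(t)=\Var(X(t))$, and $\Sigma(s,t)=\Cov(X(s),X(t))$.

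For (ii), since $X$ has a.s. $C^1$ paths, $X$ is $L^2$--continuous, so $m$ and $\Sigma$ are continuous on $T$. By the positive--definiteness assumption, $\det\Sigma(t)>0$ on $T$; hence on any compact $K\subset T$ one has $\det\Sigma(t)\geq c_K>0$. The explicit Gaussian density
\[
p_{X(t)}(v)=(2\pi)^{-d/2}\det\Sigma(t)^{-1/2}\exp\!\Big(-\tfrac12(v-m(t))^{\top}\Sigma(t)^{-1}(v-m(t))\Big)
\]
is then continuous in $v$ and uniformly bounded by $(2\pi)^{-d/2}c_K^{-1/2}$ for $(t,v)\in K\times\R^d$.

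For (iii), I use the Gaussian regression decomposition: there is a centered Gaussian field $Y(\cdot;t)$ independent of $X(t)$ such that
\[
X(s)=m(s)+\Sigma(s,t)\Sigma(t)^{-1}\bigl(X(t)-m(t)\bigr)+Y(s;t).
\]
Consequently, conditionally on $X(t)=v$, the process $X(\cdot)$ has the law of $s\mapsto m(s)+\Sigma(s,t)\Sigma(t)^{-1}(v-m(t))+Y(s;t)$. The dependence on $v$ enters only through the deterministic affine shift $v\mapsto \Sigma(s,t)\Sigma(t)^{-1}(v-m(t))$, which is $C^1$ in $s$ (since sample paths are $C^1$, $\Sigma(s,t)$ is $C^1$ in $s$) and linear, hence locally uniformly $C^1$--continuous in $v$. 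It follows that the conditional law of $X(\cdot)$ depends continuously on $v$ in the $C^1$ topology.

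Having verified (i)--(iii), Theorem~\ref{t:krf:dD} yields~\eqref{e:krf:dD}. For the finiteness statement with $B$ compact, I take $K=\overline{B}\subset T$. By Hadamard's inequality, $\Delta(t)\leq\prod_{i=1}^d\|\nabla X_i(t)\|$; since the joint distribution of $(X(t),X'(t))$ is Gaussian with covariance continuous in $t$, the conditional law of $X'(t)$ given $X(t)=u$ is Gaussian with parameters continuous in $t$, so by Hölder all conditional moments of $X'(t)$ are uniformly bounded on $K$. Together with the uniform bound on $p_{X(t)}(u)$ from step~(ii), this shows the RHS of~\eqref{e:krf:dD} is finite; equality then transfers finiteness to the LHS. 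Finiteness on every compact $B\subset T$ is exactly the Radon property.

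The main obstacle is arguably the $C^1$--continuity in (iii), where one must be careful about what topology is used; but the linearity of Gaussian conditioning reduces this to the smoothness of $\Sigma(s,t)$ in $s$, which is in turn a direct consequence of the $C^1$ sample paths of $X$.
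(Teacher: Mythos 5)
Your proof is correct and follows exactly the route the paper intends: the paper disposes of this theorem in a one-line remark ("a direct consequence of Theorem \ref{t:krf:dD} once the conditional distributions are defined by means of regression formulas"), and you have simply filled in the verification of hypotheses (ii) and (iii) via the Gaussian density and the regression decomposition, plus the finiteness bound on compact sets. No discrepancy with the paper's argument.
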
 

\begin{rem}
  Suppose that $(ii)$ is not met on $T$. Define 
  $$
  \tilde{T}:=\{ t\in T:\, \Var(X(t))\,\mbox{positive definite}\},
  $$
  which is an open set. 
 Then, \eqref{e:krf:dD} is valid for all Borel subsets of $\tilde{T}$.
\end{rem}

 \begin{rem} 
  This theorem  is a direct consequence  of Theorem  \ref{t:krf:dD}  once the conditional distributions are defined by means of regression formulas. 
\end{rem}

\begin{thm} \label{c:foncgauss}
Suppose now that $X$   is not Gaussian but a function of a Gaussian
  random field $Y$. More precisely,
$$
X(t)  = H\big(Y(t)\big). 
$$
 We assume the following for $v$ in a neighbourhood of the considered
  level $u\in\R^d$.
\begin{itemize} 
 \item[a)]$Y(\cdot) $ is a Gaussian random field  with  parameter in
   $\R^D$ and values in $\R^n$, $n\geq d$ with $C^1$ paths  and 
has a positive definite variance-covariance matrix.
   \item[b)] $H (\cdot):\R^n\to\R^d$ is a $C^1$ function.
   \item [c)] For each $v$, the  irregular set 
 $$
 \{ y \in  H^{-1} (v) :  \det(H'(y) H'(y)^\top ) =0\}$$
 is empty. 


%
 \end{itemize}
   Then, the \eqref{e:krf:dD} holds true for every Borel set $B$ contained in
   $T$.
\end{thm}

\begin{rem} The proof of this theorem is given in Section
  \ref{sec:examples}, where we also present  interesting examples, most
  of which are non-Gaussian.  In such case,   the main difficulty lies
  in defining a regular version of the conditional expectation.  One of
  the main contribution is to define it through a  very general version of the
  push-forward distribution, Lemma \ref{p:push-forward:g}, based on the
  co-area formula. This approach allows us to define the aforementioned conditional
  expectation in a variety of situations.  
\end{rem}












%
%
%
%
%
%
%

\subsection{\KRF~ for almost every level}

The following proposition is a version of the \KRF~ 
for almost every level. 

\begin{prop}\label{ae level:D:d} [\KRF~ for almost every level]
Let $X : T \to \R^d$ be a random field, and $T$ an open subset of
$\R^D$ . Assume the following:
  \begin{enumerate}
    \item[(i)] $ X(\cdot)$  has $C^1$ paths;
    \item[(ii)] for each $t \in T$ : $ \Delta(t)$ is integrable; 
    \item[(iii)] for each $t \in T$ :  $X(t)$ admits a density $p_{X(t)}(\cdot)$  in $\R^{d}$.
  \end{enumerate}
Then,  for every  Borel set  $B \subset T$, we have 
\begin{equation}\label{eq:riceas}
  \E (\sigma_{D-d} (\lev_u(B)))=\int_{B}~ \E \big( \Delta(t) \s X(t)=u
  \big)\,p_{X(t)}(u)~dt
\end{equation}
{\bf for almost every} $u \in \R^d.$
  \end{prop}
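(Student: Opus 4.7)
The plan is to reduce the statement to an application of the coarea formula (a.k.a. Federer's area/coarea formula) applied pathwise to the random field, followed by Fubini and a disintegration of the distribution of $X(t)$ against $\Delta(t)$.

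First, I would exploit hypothesis (i): almost every sample path of $X$ is $C^1$ on $T$, so Federer's coarea formula applies to $X(\omega,\cdot)\colon T\to\R^d$. Applied pathwise with test function $g(t)=\ind_B(t)\,\ind_A(X(t))$ for $A\subset\R^d$ Borel, it yields
\begin{equation*}
\int_B \ind_A(X(t))\,\Delta(t)\,dt \;=\; \int_A \sigma_{D-d}\bigl(\lev_u(B)\bigr)\,du\qquad\text{a.s.},
\end{equation*}
since on $\lev_u$ the inner integrand equals $\ind_A(u)$. (A preliminary measurability check is needed to see that $u\mapsto\sigma_{D-d}(\lev_u(B))$ is Borel; this follows from the standard monotone class argument and the fact that coarea gives this map as a Radon-Nikodym derivative.)

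Next I would take expectations on both sides and invoke Tonelli's theorem (everything is non-negative), using hypothesis (ii) to ensure integrability when needed:
\begin{equation*}
\int_B \E\bigl[\ind_A(X(t))\,\Delta(t)\bigr]\,dt \;=\; \int_A \E\bigl[\sigma_{D-d}(\lev_u(B))\bigr]\,du.
\end{equation*}
On the left-hand side, using hypothesis (iii) (existence of the density $p_{X(t)}$) together with a regular version of the conditional expectation, the inner expectation disintegrates as
\begin{equation*}
\E\bigl[\ind_A(X(t))\,\Delta(t)\bigr] \;=\; \int_A \E\bigl[\Delta(t)\,\big|\,X(t)=u\bigr]\,p_{X(t)}(u)\,du.
\end{equation*}
Combining this with Tonelli again to swap the $t$ and $u$ integrals on the left gives
\begin{equation*}
\int_A\!\left(\int_B \E\bigl[\Delta(t)\,\big|\,X(t)=u\bigr]\,p_{X(t)}(u)\,dt\right)du \;=\; \int_A \E\bigl[\sigma_{D-d}(\lev_u(B))\bigr]\,du.
\end{equation*}

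Finally, since this equality holds for every Borel $A\subset\R^d$, the two Lebesgue densities in $u$ must coincide almost everywhere, which is precisely \eqref{eq:riceas}. The main subtlety, and the only place where ``almost every'' is forced upon us, is exactly this last step: without additional continuity hypotheses on $u\mapsto\E[\sigma_{D-d}(\lev_u(B))]$ and on the right-hand side (which is what Theorem \ref{t:krf:dD} will eventually supply via the stronger assumptions (ii)–(iii) there), one cannot upgrade the ``a.e.~$u$'' conclusion to ``every $u$''. A secondary technical point is ensuring the existence of a jointly measurable regular conditional distribution so that $(t,u)\mapsto \E[\Delta(t)\mid X(t)=u]$ is well-defined and measurable; this is routine since $\R^d$ is a Polish space and we only need the disintegration of the joint law of $(X(t),\Delta(t))$ for each $t$.
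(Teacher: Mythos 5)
Your proposal is correct and follows essentially the same route as the paper: apply the coarea formula pathwise, take expectations, use Tonelli and the disintegration of the law of $(X(t),\Delta(t))$ via the density $p_{X(t)}$, and conclude by identifying the two Lebesgue densities in $u$. The only cosmetic difference is that you test against indicators $\ind_A$ of Borel sets where the paper tests against general bounded non-negative Borel functions $g$; the duality step is identical.
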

This result is simpler than Theorem \ref{t:krf:dD} and follows
almost directly from the co-area
formula. 
This is well known from the works of Brillinger \cite{Brillinger}, and
Z\"ahle \cite{Zahle} and we provide the proof for completeness.

\begin{proof}
 Let $g:\R^d\to\R$ be a test function, a bounded Borel non-negative function. 
  By the co-area formula \cite[Proposition 6.13]{AW-book}, one gets
\[
  \int_{\R^d} g(u) \sigma_{D-d}\left(X^{-1}(u)\cap B \right)du = \int_B
  g(X(t)) \,|\Delta(t)|\, dt.
\]
 Take  expectations on both sides. 
 Note that the expectations are always well-defined because we consider non-negative functions,
and also that the conditional expectation  
 $\E \big(|\Delta(t)| /X(t) =u \big)$ is well-defined for almost every $u$. 
 Applying  Fubini's theorem and conditioning on $X(t) =u$, we get 
\[
  \int_\R   g(u) \E\big(\sigma_{D-d}(\lev_u(B))\big) du = \int_\R  g(u)
  du  \int_B \E \big(|\Delta(t)| \s X(t) =u \big) p_{X(t)} (u) dt.
\]
 By duality, since $g(\cdot)$ is arbitrary,  the  terms in front of the
  factor of $g(u)$ on both sides are equal for almost every $u$, which
  leads to the result.
\end{proof}

Some practitioners consider the result above sufficient for applications. 
However, this is rarely the case. Often, the considered level 
is $u=0$ (as in the seminal work of Kac \cite{Kac}), and a statement
that holds for almost every $u$ does not necessarily hold   for $u=0$. 
It is therefore necessary to establish formula \eqref{eq:riceas} {\bf for every
level}, which requires considerable effort, especially for non-Gaussian models.

\medskip

To prove the continuity of the conditional expectation $\E
\big(\Delta(t) \s X(t) =u \big)$ with respect to the level $u$, the
proof is  straightforward in the Gaussian case, using regression formulas.  
In the general setting, the right-hand side of \eqref{eq:riceas} is defined, a
priori, only for almost every $u$.  Additional work is required to define the
density and the conditional expectation for all $u$.  Without this
effort,  the \KRF~   for all levels would not make sense.

\section{Proof of Proposition \ref{p:buli}}\label{sec:buli}

Let us start recalling some definitions.

Given $x\in\R^\ell$ and $\rho>0$, we denote by 
$B_\ell(x,\rho)\subset\R^\ell$ the ball centered at $x$  and of radius
$\rho$.

For a given $K\subset\R^D$, recall that $\sigma_\ell(K)$ is the
$\ell$-dimensional Hausdorff measure given by
$$
\sigma_\ell(K)=\lim_{\eps\to 0}  \sigma_{\ell} ^\eps (K),
$$
where $\sigma_{\ell} ^\eps$ is the $\ell$-dimensional Hausdorff
pre-measure on $\R^D$ given by
\begin{equation}\label{def:premeasure}
   \sigma_{\ell} ^\eps(K) := \alpha_\ell\inf\left\{ \sum_i
   {\rho_i}^{\ell}, \mbox{ for a
   covering}\;\{B_D(x_i,\rho_i)\}\;\mbox{of}\; K,\; \rho_i < \eps\right\},
\end{equation}
where $\alpha_\ell:=\lambda_\ell(B_\ell(0,1))$ is the Lebesgue measure of the unit ball
in $\R^\ell$. With this constant, ones get that the 
$\ell$-dimensional Hausdorff measure on $\R^\ell$ coincides with the
Lebesgue measure $\lambda_\ell$ on Borel sets of $\R^\ell$. 

\medskip

Before proving Proposition \ref{p:buli}, we need a technical lemma.
  \begin{lem} \label{l:diego}
   Let $K$   be a compact set in $ \R^D$,  and assume  $d<D$.  Suppose
    $\sigma_{D-d}(K)$ is finite. Then, there exist constants $C_1$ and
    $C_2$, depending on $d$ and $D$,  such that, for sufficiently small $\eps $:
 \begin{itemize}
   \item  there exists an ``$\eps$-packing'' (a collection of disjoint balls with center  belonging to
     $K$ and with  radius $\eps$) with cardinality $l(\eps)$,
 $$ l(\eps) \geq C_{1} \eps^{d-D} \sigma_{D-d}(K);
 $$
 \item 
   the parallel set $K^{+\eps}=\bigcup_{x\in K}B_D(x,\eps)$  satisfies
 $$
 \sigma_D(K^{+\eps})\geq  C_{2} \eps^{d}
     \sigma_{D-d}(K).
$$
 \end{itemize}
\end{lem}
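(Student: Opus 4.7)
The plan is to produce the packing by a standard maximality (Vitali-type) argument and then compare with the Hausdorff pre-measure $\sigma_{D-d}^{2\eps}$ as defined in \eqref{def:premeasure}. Both bounds come out of the same construction, so the only real work is to get the first one.

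More precisely, I would first pick a \emph{maximal} collection $\{B_D(x_i,\eps)\}_{i=1}^{l(\eps)}$ of pairwise disjoint open balls with centers $x_i\in K$. Such a maximum exists by Zorn's lemma (or simply by induction, noting that $K$ is compact so any disjoint family of $\eps$-balls with centers in $K$ has bounded cardinality). By maximality, for every $y\in K$ there is some $i$ with $|y-x_i|<2\eps$, for otherwise $B_D(y,\eps)$ would be disjoint from all $B_D(x_i,\eps)$ and could be adjoined to the family. Hence the dilated balls $\{B_D(x_i,2\eps)\}_{i=1}^{l(\eps)}$ form a covering of $K$ by balls of radius $2\eps$.

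Plugging this covering into the definition \eqref{def:premeasure} yields
\[
\sigma_{D-d}^{2\eps}(K)\;\le\;\alpha_{D-d}\sum_{i=1}^{l(\eps)}(2\eps)^{D-d}\;=\;\alpha_{D-d}\,2^{D-d}\,\eps^{D-d}\,l(\eps).
\]
Since $\sigma_{D-d}^{\delta}(K)$ is monotone non-decreasing as $\delta\to 0$ and converges to $\sigma_{D-d}(K)<\infty$, for $\eps$ small enough we have $\sigma_{D-d}^{2\eps}(K)\ge \tfrac{1}{2}\sigma_{D-d}(K)$. Rearranging gives the first inequality with
\[
C_1\;=\;\frac{1}{2^{D-d+1}\,\alpha_{D-d}}.
\]
For the second inequality it is enough to observe that $K^{+\eps}$ contains the disjoint union $\bigsqcup_{i=1}^{l(\eps)} B_D(x_i,\eps)$, hence
\[
\sigma_D(K^{+\eps})\;\ge\;l(\eps)\,\alpha_D\,\eps^{D}\;\ge\;C_1\,\alpha_D\,\eps^{d}\,\sigma_{D-d}(K),
\]
so the constant $C_2:=C_1\,\alpha_D$ works.

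The only subtle point, and what I consider the main (if minor) obstacle, is the passage from $\sigma_{D-d}^{2\eps}$ to $\sigma_{D-d}$: this is precisely where the finiteness assumption $\sigma_{D-d}(K)<\infty$ is used, and it is the reason the bound is only claimed for $\eps$ sufficiently small rather than uniformly in $\eps$. The case $\sigma_{D-d}(K)=0$ is of course trivial. Everything else is bookkeeping of constants that depend only on $d$ and $D$, as claimed.
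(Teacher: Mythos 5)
Your proof is correct and follows essentially the same route as the paper: a maximal/greedy packing whose dilated balls cover $K$, compared against the Hausdorff pre-measure $\sigma_{D-d}^{c\eps}(K)\ge\tfrac12\sigma_{D-d}(K)$ for small $\eps$, with the second bound obtained from the disjoint balls sitting inside $K^{+\eps}$. The only difference is cosmetic bookkeeping (you use radius $2\eps$ dilates where the paper selects centers separated by $3\eps$), which does not affect the argument.
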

\begin{proof}
  Recall that the premeasure $\sigma_{D-d}^\eps$, given in
  (\ref{def:premeasure}), satisfies $ \sigma_{D-d} ^\eps(K) \uparrow
  \sigma_{D-d} (K)$ as $\eps \downarrow 0$.
  Suppose $ \sigma_{D-d} (K) =a >0$ (otherwise, there is nothing to
  prove). Then, for  sufficiently small $\eps$, $ \sigma_{D-d} ^{3\eps}(K) >a/2$.
    We perform  a covering of $K$  by induction,  using balls  centered
    at  points in $K$  with  radius  $3\eps$.  To do so, we start with 
    one point of $K$,  and at each step,  we define a new ball  such
    that (i) its center belongs to $K$ and  (ii) it does not belong to the
    union of the preceding balls.  By compactness, the needed number of
    such  balls, denoted  $\ell(\eps)$, is finite. Moreover,  by definition of the
    Hausdorff pre-measure, 
    $$ 
   \ell (\eps) (3\eps)^{D-d}\alpha_{D-d} > a/2. 
   $$
   Let $t_1, \ldots, t_{\ell (\eps)}$  be the centres of the balls;
   the balls $B_D(t_i, \eps) $ constitute  an $\eps$  packing
   which proves the first assertion.  The  union of the balls, which has a
   volume not less than
   $$
  C_{2} a \eps^d
  $$
  (for certain constant $C_{2}$),  is contained whithin the parallel set  $K^{+\eps}$, which proves  the second assertion. 
  \end{proof}

Now, we turn to the proof of Proposition \ref{p:buli}.

\begin{proof}[Proof of Proposition \ref{p:buli}]
To obtain the result it is sufficient to prove \eqref{e:buli:faible} for every  compact $K$ contained in  $T$.
  Assume, without loss of generality, that  $u=0$.

   Define the density of the occupation measure  $\textrm{LT}$, the
   \emph{local time}, as 
   $$
    \textrm{LT} :=  \liminf_{\delta \to 0}  \textrm{LT}(\delta) :=
    \liminf_{\delta \to 0}\frac{1}{\lambda_d(B_d(0,\delta)}
    \sigma_D\left(\{t\in T: \|X(t)\| \leq \delta \}\right)  \ , $$
  where $\lambda_d(B_d(0,\delta))=\alpha_d\,\delta^d $ is the volume of
  the ball $B_d(0,\delta)$ with radius $\delta$ on $\R^d$. Note that
  since we used $\liminf$,  the definition  of the local time is in a
  generalized sense. 
By Fubini's theorem, the hypothesis on the density and Fatou's lemma:
  \begin{equation}\label{eq:espLT}
    \E   \big(\textrm{LT}\big) \leq C.
  \end{equation}

  This implies that $\textrm{LT}$ is an almost surely finite non-negative random variable.

  \medskip

 Let $M:=\max_{t\in T}\lambda_{max}(X'(t))$, where $\lambda_{max}$ is the
greatest singular value. 
Let$$
N(\eps):= \sup_{t\in
T;\,\|v\|<\eps}\frac{\|X(t+v)-X(t)-X'(t)v\|}{\|v\|}.
$$
Then, by compactness, $N(\eps) $ and $M$  are
  almost surely finite non-negative random variables and 
  $N(\eps)\to 0$, as $\eps\to 0$.

  Let $\tilde\lev_0$ be the irregular part of the level set $\lev_0$, i.e.,
  \begin{equation}\label{def:criticallevel}
  \tilde\lev_0:=\{t\in T:\,X(t)=0,\, \Delta (t)=0\}.
  \end{equation}

  Let  $t\in\tilde\lev_0$, then $X(t) =0$, $\textrm{rk}(X'(t))=k$ for
  some $k\in\{0,1,\ldots,d-1\}$. 
 
  Let $v_1\in V_1:=\ker (X'(t))$, where $V_1$ has dimension $D-k$.
\\
Let $v_2\in V_2:=V_1^\perp$, where $V_2$ has dimension $k$.

Then we  have 
    \begin{align*}
  \left\| X(t+v_1+v_2)\right\|&\leq 
  \left\| X(t+v_1+v_2) - X(t+v_1)  \right\|+ \left\|
  X(t+v_1)\right\|\\
  &\leq M\|v_2\|+ \|v_1\|\cdot N(\|v_1\|).
\end{align*}
  
  Suppose $\eps$ is such that $N(\eps)<1$, choose $v_1$
and $v_2$ such that
\begin{equation} \label{e:polydisk:1}
\|v_1\|\leq \frac{1}{\sqrt{2}} \eps,\quad \|v_2\|\leq \frac{1}{\sqrt{2}} 
\eps\cdot N(\eps).
\end{equation}
Then we have
$$
\left\| X\left(t+v_1+v_2\right)\right\|\leq \frac{1}{\sqrt{2}} 
  (M+1)\eps\cdot N(\eps).
$$
The condition given in \eqref{e:polydisk:1} defines a polydisk with volume 
  \begin{equation}\label{eq:volply}
 \cs \eps^{D-k} \big( \eps\cdot N(\eps)) ^k,
  \end{equation}
where the constant  depends on $D,\,d$, and $k$. The polydisk  is  included in
$\bar \lev_0^{+\eps}$ and, thus, in $T$ for   sufficiently small $\eps$.

  \medskip

Let us define the event $Z_{D,d}=\{\omega:\,\sigma_{D-d}(\tilde\lev_0)>0\}$,
and assume, by contradiction, that $\mathbb{P}(Z_{D,d})>0$.

In the case $D=d$, given $\omega\in Z_{d,d}$, let
$\delta=\delta_{\omega,\eps}=\big((M+1)\eps\cdot N(\eps)\big)$. 
Then, the approximated local time $\textrm{LT}(\delta)$ is greater than
$$
 \frac{1}{(\eps\cdot N(\eps))^d}\cs \eps^{d-k}
 \big( \eps\cdot N(\eps)) ^k=\cs (N(\eps))^{k-d},
$$
which tends to infinity as $ \eps \to 0$. This gives a contradiction
with the fact that $\textrm{LT}$ is an almost surely finite non-negative
random variable (cf. (\ref{eq:espLT}) for $D=d$).

\medskip

Now, let us consider the case $D>d$. When $\omega\in Z_{D,d}$, by  Lemma
\ref{l:diego} and (\ref{eq:volply}),  the approximated local time  $
\textrm{LT}(\delta)$ for $\delta=(M+1)\eps\cdot N(\eps)/\sqrt{2} $ satisfies
\begin{align*}
  \textrm{LT}(\delta) &= \frac{1}{\lambda_d(B_d(0,\delta)}
  \sigma_D\{t\in T: \|X(t)\| \leq (M+1)\eps\cdot N(\eps)/\sqrt{2}
  \} \\
  & \geq 
  \frac{\cs}{\lambda_d(B_d(0,\delta)} \ell(\eps)\eps^{D-k}
  \big( \eps\cdot N(\eps)) ^k\\
  &\geq 
 \cs ( \eps  N(\eps)) ^{-d} \eps^{D-k} \big(
 \eps\cdot N(\eps)) ^k\times \eps^{d-D} \\
  &= \cs (   N(\eps) ^{k-d}, 
\end{align*}
which again tends to infinity as $ \eps \to 0$, leading to a
contradiction.
\end{proof}

\section{Proof of Theorem \ref{t:krf:dD} for $D=d$}\label{sec:proofmain}

We first prove  Theorem \ref{t:krf:dD} for the case $D=d$. The general
  version of the proof, which follows from this case and makes use of the
  Crofton formula, will be posponed to Section \ref{sec:KRFD}.

Let  $C^1(T,\R^d)$ be the space of $C^1$ functions defined on $T$ with
values on $\R^d$. 
This space is equipped  with  the following canonical norm:
     \begin{equation}\label{def:norm1}
     \|f\| = \sup_{t \in T} ( \|f(t)\| +  \|f'(t)\|),
     \end{equation}
     where we have chosen, once and for all, a certain norm in  $\R^d$
     and in the space  of $(d\times d)$-matrices.

The following proposition, concerning the continuity of the number of roots
with respect to the $C^1$ topology, is inspired by the proof in
dimension 1  by Angst-Poly \cite{angst2019zeros}.

\begin{prop}\label{p:poly} Let $T$ be a compact set of $\R^d$.
 Suppose $X_n(\cdot), n=1,2,\ldots$  is a sequence  of random
  fields  defined on $T$  and belonging to the space $C^1(T,\R^d)$
  defined in (\ref{def:norm1}). Suppose that the sequence $X_n(\cdot)$ converges a.s.  to
 $X(\cdot)$  with the aforementioned topology.  Suppose that in
 addition, a.s.  \begin{itemize}
   \item[(a)] $N_{u}(X,\partial T) =0$ ,
   \item[(b)] $\{t \in T : X(t) = u ; \  \Delta(t) =0\} =\emptyset.$ 
   \end{itemize} 
    Then
     $$
     N_{u}(X_n,T)  \to  N_{u}(X,T)\quad   \mbox{a.s. as}\quad n \to \infty.
$$
\end{prop}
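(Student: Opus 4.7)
The plan is to apply Lemma~\ref{l:kac:d} to both $X$ and to $X_n$ (for $n$ large) with a \emph{single} small parameter $\delta_0$, and then pass to the limit by dominated convergence on the resulting integral representations.

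First I would exploit hypotheses (a) and (b) together with the compactness of $T$ to describe the zero set of $X$: it is a finite collection of interior points $\{t_1,\ldots,t_N\}$, at each of which $\det X'(t_i)\neq 0$. By continuity, I would select pairwise disjoint closed balls $\bar B_i \subset \mathring{T}$ around $t_i$ on which $|\det X'|$ is bounded below by some $c>0$, and set $\eta := \min_{t\in T\setminus\bigcup_i B_i} |X(t)-u| > 0$. Next I would pick $\delta_0 \in (0,\eta/3)$ small enough that Lemma~\ref{l:kac:d} applies to $X$ at this value of $\delta_0$, and additionally such that $\lambda_d\bigl(\{t\in T:\,|X(t)-u|=\delta_0\}\bigr)=0$; the latter holds at all but countably many $\delta_0$, since the monotone map $\delta\mapsto \lambda_d(\{|X-u|\le\delta\})$ is continuous outside a countable set. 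This yields
\[
N_u(X,T)=\frac{1}{\lambda_d(B_d(0,\delta_0))}\int_T \ind_{\{|X(t)-u|<\delta_0\}}\,\Delta(t)\,dt.
\]

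Next I would transfer this to $X_n$. Writing $\epsilon_n := \|X_n - X\|$ for the $C^1$-norm in \eqref{def:norm1}, which tends to $0$, for $n$ large enough that $\epsilon_n < \min(\delta_0,\,c/2)$ two facts follow: first, on $T\setminus\bigcup_i B_i$ one has $|X_n-u|\ge \eta-\epsilon_n > \delta_0$, so every zero of $X_n-u$ lies in $\bigcup_i B_i \subset \mathring{T}$ (in particular $N_u(X_n,\partial T)=0$); second, on each $\bar B_i$, $|\det X_n'(t)| \ge c - \epsilon_n \ge c/2$, so every such zero is regular. Hypotheses (a) and (b) of Lemma~\ref{l:kac:d} are thus valid for $X_n$, and the same $\delta_0$ remains admissible since the $B_i$ continue to serve as diffeomorphism neighborhoods of the zeros of $X_n$, giving
\[
N_u(X_n,T)=\frac{1}{\lambda_d(B_d(0,\delta_0))}\int_T \ind_{\{|X_n(t)-u|<\delta_0\}}\,\Delta_n(t)\,dt,
\]
where $\Delta_n(t)=|\det X_n'(t)|$.

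Finally I would pass to the limit. Since $X_n\to X$ in $C^1$, $\Delta_n\to\Delta$ uniformly on $T$, and $\ind_{\{|X_n(t)-u|<\delta_0\}}\to\ind_{\{|X(t)-u|<\delta_0\}}$ at every $t$ with $|X(t)-u|\neq\delta_0$, hence on a set of full Lebesgue measure by the construction of $\delta_0$. Dominated convergence gives $N_u(X_n,T)\to N_u(X,T)$; since both sides are integers, they in fact agree for all $n$ sufficiently large. The main obstacle is the verification that one single $\delta_0$ simultaneously validates Lemma~\ref{l:kac:d} for $X$ and for all $X_n$ (eventually): this reduces to uniform control of the diffeomorphism neighborhoods of the zeros, which follows from the inverse function theorem applied to each $X_n$ together with the proximity of its zeros to those of $X$, plus the generic choice of $\delta_0$ avoiding positive-measure level sets, handled by the monotonicity argument above.
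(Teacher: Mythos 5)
Your route differs from the paper's: you try to represent both $N_u(X,T)$ and $N_u(X_n,T)$ by the Kac counting integral of Lemma~\ref{l:kac:d} at a \emph{single} radius $\delta_0$ and then pass to the limit by dominated convergence, whereas the paper argues directly on the roots, showing via a quantitative inverse-function/contraction argument (the auxiliary maps $\widetilde X_{n,i}(t)=t-(X'(x_i))^{-1}(X_n(t)-u)$, shown to be contractions of $B(x_i,\delta)$ into itself) that for $n$ large each ball around a zero of $X$ contains exactly one zero of $X_n$ and no zeros occur elsewhere. Your scheme is viable in principle, but the entire difficulty of the proposition is concentrated in the one step you assert rather than prove: that the \emph{same} $\delta_0$ is admissible in Lemma~\ref{l:kac:d} for all $X_n$ with $n$ large. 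Lemma~\ref{l:kac:d} only guarantees the identity for ``$\delta$ sufficiently small'' depending on the function, so a priori you get radii $\delta_n$ possibly shrinking to $0$, and then the dominated-convergence comparison of the two integrals breaks down.

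Concretely, two things are missing. First, $|\det X_n'|\ge c/2$ on a ball $\bar B_i$ does not make $X_n$ a diffeomorphism of $B_i$ onto its image (a nonvanishing Jacobian gives only a local diffeomorphism), and ``the inverse function theorem applied to each $X_n$'' produces neighborhoods whose size is not uniform in $n$; to get uniformity you need exactly the quantitative estimates the paper carries out (bounds on $\|(X')^{-1}\|$ and on the oscillation of $X_n'$ near each $x_i$), or alternatively a Brouwer-degree argument (constant sign of $\det X_n'$ on each connected $B_i$ plus $B(u,\delta_0)\cap X_n(\partial B_i)=\emptyset$ makes $v\mapsto N_v(X_n,B_i)$ constant on $B(u,\delta_0)$ and equal to $|\deg(X_n,B_i,u)|=|\deg(X,B_i,u)|=1$ by homotopy invariance). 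Second, your construction shows that all zeros of $X_n-u$ lie in $\bigcup_i B_i$ and are regular, but it does not show that $X_n-u$ \emph{has} a zero in each $B_i$, nor that $X_n(B_i)\supset B(u,\delta_0)$; both are needed for $\int_{B_i}\ind_{\{|X_n-u|<\delta_0\}}\Delta_n\,dt$ to equal $\lambda_d(B_d(0,\delta_0))\cdot N_u(X_n,B_i)$ rather than merely $\lambda_d\bigl(X_n(B_i)\cap B(u,\delta_0)\bigr)\cdot(\text{an average multiplicity})$, and without them the integral identity for $X_n$ at level $u$ (as opposed to an average over $v\in B(u,\delta_0)$) is unjustified. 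Once either the contraction argument or the degree argument is supplied, your proof closes; as written, it assumes the conclusion's hardest part. (A minor further point: $|\det X_n'|\ge c-\epsilon_n$ needs the local Lipschitz constant of $\det$ on the relevant bounded set of matrices, not just $\|X_n'-X'\|\le\epsilon_n$.)
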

Note that we obtain the same result if we replace  the a.s. convergence by the  weak convergence.

  \begin{proof}   
 In this proof, we use the operator  norm for $\|X'(t)\|$. 
    For each realisation, let
    $m$  be the number of solutions  of $X(t) =u$ in $T$.  Since $T$ is
    compact,  (b) implies that $m$ is finite.  The case  $m=0$ is direct.    We
    consider  the case $m>0$. 

    Let $x_1,\ldots,x_m$ be the solutions to
    $X(t)=u$ for $t\in T$.
    By hypothesis, these solutions are interior points of $T$. 
  
     By the inverse function theorem, 
    for sufficiently small  $\delta$ and for all $i=1,\ldots,m,  X(\cdot)$ is a diffeomorphism   from $ B(x_i,\delta)$ to some open set of $\R^d$. Define
    $$
    W:=\cup_{i=1}^m  B(x_i,\delta).
    $$
   We may assume that  there exists $\alpha>0$ such that
     $\|(X'(t))^{-1}\|\leq \alpha$, for all $t\in W$.

    By uniform convergence in the $C^1$ topology, there exists  $N\in\N$
    such that 
   \begin{equation}\label{eq:lemaXnXN}
     \|X_n-X\|<\min(\delta,(4\alpha)^{-1}),\quad \mbox{for all}\quad n>N
   \end{equation} 
    and   $X_n(\cdot) $ has no solution to $X_n(t)=u$   on $T\setminus W$ for $n>N$.

%

    Let us define the auxiliary  $ C^1$ functions   $\widetilde X_{n,i}: 
    B(x_i,\delta)\to\R^d$, given by
    $$
   \widetilde X_{n,i}(t)= t - (X'(x_i))^{-1}( X_n(t)-u),\qquad (i=1,\ldots,m).
   $$
 Note that 
    $X_n$ has a solution to $X_n(t)=u$ on $B(x_i,\delta)$ if and only if 
 the map $\widetilde X_{n,i}$ has a fixed point.
 
  By strengthening the conditions on  $\delta$, we can even require the following

 \begin{equation}\label{eq:lemaXnXVi}  
   \sup_{t\in B(x_i,\delta)}  \|X'(t)-X'(x_i)\|<
  (4\alpha)^{-1},\qquad (i=1,\ldots,m).
  \end{equation}
    From
    \eqref{eq:lemaXnXN} and \eqref{eq:lemaXnXVi}, for all $t\in
    \widetilde B(x_i,\delta)$, 
 \begin{align*}
   \|(\widetilde X_{n,i})'(t)\|& =\|I_d - (X'(x_i))^{-1}X_n'(t)\|\\
   &\leq \|(X'(x_i))^{-1}\|\cdot \|X'(x_i)-X_n'(t)\|\\
   &\leq \alpha\cdot 
   \left(\|X'(x_i)-X'(t)\| + \|X'(t)-X_n'(t)\|\right)\\
   &< \alpha\left( \frac{1}{4\alpha}+ \frac{1}{4\alpha}\right)=
   \frac12,
 \end{align*} 
for all $n>N$.
    We conclude that $\widetilde X_{n,i}$ is a contraction $
    B(x_i,\delta) \to B(x_i,\delta)$. The fixed point theorem  implies 
     that  $\widetilde X_{n,i}$ has exactly  one fixed point. 
    
     So, for large enough $n$, the number of solutions is $m$,
     concluding our proof.
    \end{proof}

We also need  another version of the Bulinskaya Lemma  which  is
extracted  from  \cite[p. 13]{AW-book}.
  \begin{lem}\label{l:bufield}
Let $\mathcal{Y}=\{Y(t): t\in W \}$ be a random field with values in
$\R^{m+k} $, where $W$ an open subset of $\R^d$, and  $m$ and $k$ are
positive integers. Let $u\in \R^{m+k}$ and $I$ a subset of $W$.\\
We assume that $\mathcal{Y}$ satisfies the following conditions:
\begin{itemize}
\item the paths $t\mapsto Y(t)$ are of class
$C^1$,
\item for each $t\in W$, the random vector $Y(t)$ has a density and
there exists a constant $C$ such that
$$
p_{Y(t)}(x) \leq C
$$
for $t\in I$ and $x$ in some neighborhood of $u$,
\item the Hausdorff dimension of $I$ is smaller or equal than $m$.
\end{itemize}
Then, almost surely, there is no point $t\in I$ such that $Y(t)=u$.
\end{lem}

 We turn now  to the main result.
 Three main problems may arise.

\begin{itemize}
\item  There may be  roots on the boundary of the considered set.

\item The  roots  $X(t)=u$ could be associated  with a Jacobian
     $X'(t)$,  that is  almost singular  or even singular, i.e., $\Delta(t) =0$.
\item  The quantities under consideration  may be too large, leading to non-integrability. 
\end{itemize}

The first problem will be overcome  by  applying Lemma
\ref{l:bufield}, while the last two will be resolved using a  {\bf bounding and tapering
argument}, followed by a monotone convergence  argument. This (without
tapering) is, for example, a routine argument  in stochastic calculus, 
where one often  localizes, for instance, stopping times.  Tapering
is necessary to maintain continuity, which is a key argument.  We introduce our notation.
\medskip

Let $\mathcal{F }$ be the  continuous monotone, piecewise linear
function: $ \R^+ \to [0,1]$  such that:  (i)   it vanishes on $[0,
1/2]$; (ii) it takes the value 1 on $[1,+\infty)$.  
Consider  $F_n (x)
:= \mathcal{F} (nx)$, a tapered version of the indicator function
$\ind_{x>1/n}$, and $G_n(x) := 1-\mathcal{ F}(x/n)$, a tapered version
of the indicator function  $\ind_{x<n}$

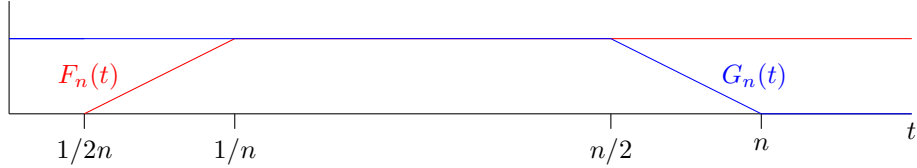
\begin{figure}[h]
\begin{center}
\begin{tikzpicture} 

\draw[-] (0,0) -- (12,0) node[below] {$t$};
\draw[-] (0,0) -- (0,1.5);
\draw[color=blue] (0,1) -- (1,1);
\draw[thin] (1,-.2) node[below] {$1/2n$} -- (1,0);
\draw[color=red] (1,0) -- (2,.5) node[left] {$F_n(t)\quad\,$} -- (3,1);
\draw[thin] (3,-.2) node[below] {$1/n$} -- (3,0);
\draw[color=red] (3,1) -- (12,1);

\draw[color=blue](0,1) -- (8,1);
\draw[thin] (8,-.2) node[below] {$n/2$} -- (8,0);
\draw[color=blue] (8,1) -- (9,.5) node[right] {$\quad G_n(t)$} -- (10,0);
\draw[thin] (10,-.2) node[below] {$n$} -- (10,0);
\draw[color=blue] (10,0) -- (12,0);
\end{tikzpicture}
\caption{Tapered functions}
\end{center}
\end{figure}

\begin{proof}[Proof of Theorem \ref{t:krf:dD} for $D=d$]


 For the moment,  consider $R$ to be a   compact  hyper-rectangle contained in $T$.  
  Let $v$ be a point in $\R^d$. 
 By applying of Lemma \ref{l:bufield} almost surely there is no root of $X(t) =v$  on the boundary of $R$ which has dimension $d-1$.
  
  The bounding  and tapering argument  is  introduced by considering 
\begin{align}
C^n_v(R) &:=  \sum _{ s \in R :  X(s) =v}
 F_n (\Delta(s)) G_n(\Delta(s))   ,\label{e:ccc}
   \\
Q_v^{n}(R) &:= C^n_v(R)  G_n (C^n_v(R)). \label{e:qqq}
\end{align}
In  (\ref{e:ccc}), when the summation index set is empty, we put  $
C_v^n(R)=0$. Let $g : \R^d \to \R ^+$ be continuous with compact
  support. We apply the area formula \cite[eq. (6.2), page 161]{AW-book} 
for the function
$$
h(t,v) = F_n( \Delta(t)) G_n ( \Delta(t))G_n(C^n_v(R)) g(v)
$$
to get that 
\begin{equation*}
\int_{\R^d}  g(v) Q_v^{n} (R) dv
 =  \int_R \Delta(t) F_n
(\Delta(t)) G_n ( \Delta(t))   G_n (C^n_{X(t) }(R))~g( X(t)) dt.
\end{equation*}

Taking expectations in both sides and applying Fubini's theorem gives
  $$
 \int_{\R^d}  g(v) ~\E( Q_v^{n} (R) )dv =
\int_{\R^d}  g(v)\, dv   \int_R  \E \big[ \Delta(t) F_n (\Delta(t))
   G_n (C^n_v(R)) \s X(t)=v \big] ~ p_{X(t)} (v)   dt.$$
   Note that the conditional expectation  is well-defined, since the variable is bounded. 
Since this equality holds for any  continuous function $g$ with bounded
support, it follows that:
\begin{equation}\label{e:ppu}
  \E (Q_v^{n} (R))  = \int_R \E \big[ \Delta(t) F_n (\Delta(t) ) G_n
  (\Delta(t) ) 
    G_n (C^n_v(R)) \s X(t) =v\big]  p_{X(t)} (v)   dt,
    \end{equation}
for almost every $v$.

\medskip

%
%
Let us prove that the
left-hand side of (\ref{e:ppu}) is a continuous function of $v$.
Outside the compact  set 
$$
 \{ t\in R :  \Delta(t)
  \geq 1/2n\},
$$
the contribution to the sum (\ref{e:ccc}) defining $C^n_v(R)$ is
zero for any $v \in \R^d$. 
Now, consider a particular level $u$. By the local inversion theorem, the
number of points $t \in R $ such that
 $X(t) = u$ and  $\Delta(t) \geq 1/2n $, is finite, and all these
 points are interior points. Let $k$ denote this number.

 If $k$ is non-zero, then $X(t)$  is locally invertible in the $k$ neighborhoods
 $V_1, \ldots ,V_k \subset R$ around these $k$ points.  For $v$ in some
 (random) neighborhood of $u$, there is exactly one root of $X(s) =v$ in
 each $V_1,...,V_k$, and the contribution to $C_v^{n}(R)$ from these
 points can be made arbitrarily close to the contribution corresponding to $v=u$.
 Outside the union of $V_1,...,V_k$, $X(t) - u$ is bounded away from
 zero in $R$, so that the contribution  to $C_v^{n}(R) $ vanishes  if $v$
 is sufficiently close to $u$.

Therefore,  the function $v \to Q_v^{n}(R)$ is a.s.
continuous at $v= u$. On the other hand, it is clear from its
definition that $ Q_v^{n}(R) \leq n $, and an application of the
Lebesgue Dominated Convergence Theorem implies the continuity  of
$ \E(Q_v^{n}(R))$ as a function of $v$.

Let us examine the right-hand side of (\ref{e:ppu}).  The
key point is that, due to the bounding and tapering argument,
irregular points  do not contribute  to $C_v^n$. 
 As a consequence,  we can apply Proposition  \ref{p:poly}  to $C_v^n$.
 Recall that (a)  is satisfied. As for  (b) it is unnecessary because
 roots with a singular  jacobian $X'(t)$ are excluded  by the term $F_n
 (\Delta(t))$.
Hypotheses (ii) and (iii), along with the
boundedness of   the quantities considered due to the presence of $G_n$
imply that $$
\E \big[ \Delta(t) F_n (\Delta(t) ) G_n (\Delta(t) ) ) G_n (C^n_v(R))
\big / X(t) =v\big]
$$ 
is a bounded, continuous function of $v$. 
Since $p_{X(t)}(v)$ is continuous  as a function of $v$ and uniformly bounded
in $t$,  the right-hand side of (\ref{e:ppu}) is continuous and equality
in  (\ref{e:ppu}) holds {\bf  for every} $v$.

We can now take the limit as $n \to +\infty$, applying  the monotone
convergence theorem. This leads to the equivalent  of  \eqref{eq:KRF}
for the number of regular roots  in $R$. By applying
 Proposition \ref{p:buli}, for $D=d$, we get \eqref{eq:KRF} for $B=R$. 
Since the class of open relatively compact rectangles generates  the
Borel $\sigma-$algebra, and both members of \eqref{eq:KRF} are measures
on Borel subsets $B$ of $T$, a classic extension argument allows us extend the result. Thus, we obtain \eqref{eq:KRF} for  every Borel set  $B$.
\end{proof}

%

\section{Proof of Theorem \ref{t:krf:dD} for $D>d$}\label{sec:KRFD}

Let us start with some preliminaries on integral geometry.


 Given $\ell<D$,   we
consider the Grassmannian  manifold $\grass D\ell$ of $\ell$-dimensional
linear subspaces of $\R^D.$

Let  $d\grass D\ell$   be
the Haar measure: the uniquely defined orthogonally invariant probability measure on this space. 
Let $B$ be a Borel set in $\R^D$.
We define the $m$-\emph{integral geometric measure} of $B$
 by
 \begin{equation}\label{eq:igm1}
   \mathcal{I}_{D, m}(B):=c_{D,m}  \int_{V\in  \grass{D}{D-m}} d\grass{D}{D-m}(V) 
 \int_{y\in V^\perp} d\lambda_{m}(y) \,
 \#   \left\{ B\cap \ell_{V,y} \right\} ,
  \end{equation}
where $\ell_{V,y}$ is the affine linear space $\{y+V \}$,  and where  we
naturally identify the element of the Grassmanian $V\in\grass{D}{D-m}$ with the associated
subspace of codimension $m$ on $\R^D$. 
 The constant 
 $
 c_{D,m}$ is defined by 
\begin{equation}\label{constante-Crofton}
  c_{D,m} := \pi^{1/2} \frac{\Gamma\left(\frac{D+1}{2}
  \right)}{\Gamma\left(\frac{m+1}{2} \right)\Gamma\left(\frac{D-m+1}{2}
  \right)}.
\end{equation}
 
 The integrand  in \eqref{eq:igm1} is measurable
 (see, for example, Morgan \cite[p. 13]{Morgan}), and since it is non-negative, the integral is always well defined, finite, or infinite.
 This measure is also called the \emph{Favard measure}.
 
 Next theorem is a fundamental  formula of integral geometry.
\begin{thm}  [Crofton's formula]\label{t:crofton2}
If $B\subset\R^D$ is $m$-rectifiable, then Crofton's formula \cite[p. 31]{Morgan} yields
\begin{equation}\label{Crofton} 
\sigma_{m}(B)= \mathcal{I}_{D,m}(B).
\end{equation}\qed
\end{thm}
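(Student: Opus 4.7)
The plan is to first establish the formula for a bounded $C^1$ $m$-dimensional submanifold $B_0 \subset \R^D$ via the area formula, then extend to general $m$-rectifiable sets by a countable decomposition and countable additivity, and finally identify the constant. The key geometric input is the invariance of the Haar measure $d\grass{D}{D-m}$ under the orthogonal group.

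For the smooth case, fix $V\in\grass{D}{D-m}$ and consider the restriction of the orthogonal projection $\pi_V:\R^D\to V^\perp$ to $B_0$. Since $V^\perp$ is $m$-dimensional and $B_0$ is an $m$-dimensional manifold, this is a Lipschitz map between spaces of the same dimension; by the area formula one obtains
\begin{equation*}
\int_{V^\perp}\#\{B_0\cap(y+V)\}\,d\lambda_m(y) \;=\;\int_{B_0} J_V(t)\,d\sigma_m(t),
\end{equation*}
where $J_V(t):=|\det(P_{V^\perp}|_{T_tB_0})|$ is the (absolute) Jacobian of $\pi_V$ at $t$, computed as the determinant of the orthogonal projection $P_{V^\perp}$ restricted to the tangent $m$-plane $T_tB_0$. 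Integrating in $V$ and applying Fubini,
\begin{equation*}
\int_{\grass{D}{D-m}}\!\!\int_{V^\perp}\#\{B_0\cap(y+V)\}\,d\lambda_m(y)\,d\grass{D}{D-m}(V)
\;=\;\int_{B_0}\Bigl(\int_{\grass{D}{D-m}}\!\!J_V(t)\,d\grass{D}{D-m}(V)\Bigr)d\sigma_m(t).
\end{equation*}

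The inner integral depends on $t$ only through the $m$-plane $T_tB_0$. By the orthogonal invariance of the Haar measure, the map $W\mapsto\int J_V(W)\,d\grass{D}{D-m}(V)$ is constant on $\grass{D}{m}$. Denote this constant by $\kappa_{D,m}$. Hence
\begin{equation*}
\int_{\grass{D}{D-m}}\!\!\int_{V^\perp}\#\{B_0\cap(y+V)\}\,d\lambda_m(y)\,d\grass{D}{D-m}(V) \;=\;\kappa_{D,m}\,\sigma_m(B_0),
\end{equation*}
which gives Crofton's formula for $B_0$ with $c_{D,m}=\kappa_{D,m}^{-1}$.

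The passage to a general $m$-rectifiable set $B$ goes by definition: up to a $\sigma_m$-null set, $B$ is a countable union $\bigcup_k\Phi_k(A_k)$ of Lipschitz images, and by Rademacher's theorem plus a standard measurable selection one can disjointify and further refine the decomposition into countably many $C^1$ $m$-submanifold pieces. The formula holds for each piece by the smooth case, and both sides of Crofton's formula are $\sigma$-additive set functions of $B$. The only subtlety is that a $\sigma_m$-null set must contribute zero also to $\mathcal{I}_{D,m}$; this follows because the orthogonal projection onto $V^\perp$ is $1$-Lipschitz, so it maps $\sigma_m$-null sets in $\R^D$ to $\lambda_m$-null sets in $V^\perp$ for every $V$, forcing the intersection count to vanish a.e.\ in $y$ for a.e.\ $V$.

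The main obstacle is the explicit evaluation of $\kappa_{D,m}$ to recover the gamma-function expression in \eqref{constante-Crofton}. This is a purely linear-algebraic computation: taking $W=\R^m\times\{0\}$ fixed and letting $V$ range over $\grass{D}{D-m}$ with Haar measure, $\kappa_{D,m}$ is the expectation of $|\det M|$ where $M$ is the top-left $m\times m$ block of a Haar-random orthogonal matrix, equivalently the product of cosines of the $m$ principal angles between $W$ and $V^\perp$. Expressing this in terms of the joint density of principal angles (a Jacobi-type ensemble) and carrying out the iterated beta integrals yields precisely the constant displayed in \eqref{constante-Crofton}; this normalization check is the lengthiest but conceptually routine piece of the argument.
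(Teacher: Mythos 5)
The paper itself offers no proof of this theorem: it is quoted verbatim from Morgan \cite[p.~31]{Morgan} (Federer's Crofton/structure theorem) with a \emph{qed} attached to the statement, so there is no in-paper argument to compare against. Your proposal is a correct reconstruction of the standard proof: the area formula applied to the orthogonal projection onto $V^\perp$ restricted to a $C^1$ piece, Tonelli, orthogonal invariance of the Haar measure to collapse the inner Grassmannian integral to a constant $\kappa_{D,m}$ depending only on the dimension of the tangent plane, countable additivity over a decomposition of the rectifiable set, and the $1$-Lipschitz projection argument showing that a $\sigma_m$-null remainder contributes nothing to $\mathcal{I}_{D,m}$. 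Two details are softer than they should be, though neither is fatal. First, refining the Lipschitz-image decomposition of a rectifiable set into $C^1$ submanifold pieces is not a consequence of Rademacher's theorem alone; the tool that actually produces the $C^1$ pieces up to a $\sigma_m$-null set is the Lusin-type $C^1$ approximation of Lipschitz maps (Federer 3.1.16, via Whitney extension). Second, you defer the evaluation of $\kappa_{D,m}$ to a ``routine'' principal-angle/Jacobi-ensemble computation; note that the paper already performs exactly this computation by a cleaner device in Lemma \ref{l: det:mario} and Proposition \ref{prop:intgeo}: reducing to $M=(I_m\;0)$ by SVD and homogeneity, then randomizing $M$ to a standard Gaussian matrix so that the successive distances to spans are independent $\chi$ variables, which gives $\kappa_{D,m}=\Gamma\left(\tfrac{m+1}{2}\right)\Gamma\left(\tfrac{D-m+1}{2}\right)\pi^{-1/2}\Gamma\left(\tfrac{D+1}{2}\right)^{-1}=c_{D,m}^{-1}$, in agreement with \eqref{constante-Crofton}. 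You should also acknowledge the joint measurability of $(V,y)\mapsto\#\{B\cap\ell_{V,y}\}$ needed for Tonelli, which is precisely the point for which the paper cites Morgan. With those references supplied, your argument is complete and is essentially the proof the cited source gives.
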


A simple way to characterise  $m$-rectifiable sets  is  to describe them
as  the union  of  countably many  $C^1$ manifolds and a set  of
$\sigma_m$  measure zero. 

 A  Borel set can be decomposed    into  the disjoint union  of a
 rectifiable set  and a purely non-rectifiable set. This last class of
 sets has always a  null integral geometric measure, and this implies that the Hausdorff measure
 of a set  is always greater or equal than the integral geometric
 measure. (For more details, and the definition of purely non-rectifiable
 set see \cite[3.17 Structure Theorem]{Morgan}.)

\begin{proof}[Proof of Theorem \ref{t:krf:dD}]
 First, note that under our hypotheses and  because of Proposition \ref{p:buli}, a.s. the level set  $\lev_u$ is rectifiable. 
The proof is a direct application of Formula \eqref{eq:igm1} and
  Theorem  \ref{t:krf:dD} for $D=d$. 
  By Fubini's theorem
\begin{multline*}
  \E \left( \sigma_{D-d}(\lev_u(B))\right) 
  =c_{D,D-d}\int_{V\in \grass{D}{d}}  \grass{D}{d}d(V)    \int_{y\in V^\perp}  d\lambda_{D-d}(y)
\, \E\#   \left\{ \lev_u(B))\cap \ell_{V,y} \right\},
\end{multline*}
where 
$$
 \E\#   \left\{ \lev_u(B)\cap \ell_{V,y} \right\} =\E\Big( N_u\big( X, B\cap (V+y)\big) \Big).
$$

    Let $V$ be as above, and let $v_1,\ldots,v_d$ be an orthonormal basis
    of $V$.  Let $\pi_V^\top$ be the linear map given by the matrix
  $[v_1,\ldots, v_d]$.

 We can parametrise  the affine space $(V+y)$ as 
 $$
 (V+y) = \{ (\pi_V) ^\top  s +y,\, s\in \R^{d}\}.
 $$
Now it is clear that the restriction of our field $X$ to the above affine space
  satisfies the conditions of Theorem \ref{t:krf:dD}, for the case
  $D=d$. Then, applying  the \KRF~ for $D=d$, and  Fubini's theorem, we
  obtain
 \begin{multline*}
  \E \left( \sigma_{D-d}(\lev_u(B))\right) \\
   =c_{D,D-d} \int_{V\in \grass{D}{d}}d\grass{D}{d}(V) \int_B 
 \E\big(|\det( X'(t)) (\pi_V)^{\top} |\, \s\, X(t) =u\big) p_{X(t)}(u) dt,
\end{multline*}
 After a new inversion of integral  we get the resulting identity 
$$
c_{D,D-d}   \int_{V\in G(D,d)}|\det(X'(t) (\pi_V)^\top)| d\grass{D}{d}(V)
  =  \Delta(t),
$$
which follows directly  from  Proposition \ref{prop:intgeo} below.
\end{proof}

 We begin with a well-known lemma about the computation of Gaussian determinants. 
 Although it is already known (see for example Anderson \cite{And}), we include the proof here for ease of reading.

 \begin{lem} \label{l: det:mario}
 Let $M$ be a $D\times d$ standard Gaussian  matrix (its entries  are i.i.d. N(0,1) r.v.), $D\geq d$. Then,
 $$
  \E  (\Delta(M) )= \E\sqrt{\left|\det MM^\top \right|} =  
   2^{d/2}  \frac{\Gamma \left( \frac{D+1}{2}\right)}{\Gamma
   \left(\frac{D-d+1}{2}\right)} =:  L_{D,d}.
 $$
 \end{lem}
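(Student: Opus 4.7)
The plan is to exploit the rotational invariance of the standard Gaussian law and reduce the computation to a product of one-dimensional chi-moments via Gram--Schmidt orthogonalisation.

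Interpret $\Delta(M)$ as the $d$-dimensional volume of the parallelepiped spanned by the $d$ Gaussian vectors in $\R^D$ that form the ``short'' side of $M$, and call these vectors $m_1,\ldots,m_d$ (so that the relevant Gramian $\langle m_i,m_j\rangle$ is a genuinely $d\times d$ matrix with a.s. nonzero determinant). By the Gram--Schmidt process,
$$
\Delta(M) \;=\; \prod_{i=1}^{d}\|m_i^{\perp}\|,
$$
where $m_i^\perp$ denotes the component of $m_i$ orthogonal to the linear span of $m_1,\ldots,m_{i-1}$.

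By the rotational invariance of the standard Gaussian law on $\R^D$, conditional on $m_1,\ldots,m_{i-1}$, the vector $m_i^{\perp}$ is a standard Gaussian on the orthogonal complement of $\mathrm{span}(m_1,\ldots,m_{i-1})$, which is $(D-i+1)$-dimensional a.s. Hence the $\|m_i^\perp\|$ are mutually independent with $\|m_i^\perp\|^2\sim\chi^2_{D-i+1}$. Using the standard chi-moment $\E|\chi_k|=\sqrt{2}\,\Gamma((k+1)/2)/\Gamma(k/2)$, one gets
$$
\E\,\Delta(M) \;=\; \prod_{i=1}^{d}\sqrt{2}\,\frac{\Gamma\!\bigl(\tfrac{D-i+2}{2}\bigr)}{\Gamma\!\bigl(\tfrac{D-i+1}{2}\bigr)} \;=\; 2^{d/2}\,\frac{\Gamma\!\bigl(\tfrac{D+1}{2}\bigr)}{\Gamma\!\bigl(\tfrac{D-d+1}{2}\bigr)} \;=\; L_{D,d},
$$
where the product telescopes after writing out the consecutive gamma factors.

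There is no serious obstacle in this argument; the only delicate point is justifying that the successive perpendicular norms are genuinely independent chi variables, which is immediate from the rotational invariance of each $m_i$ together with the fact that the orthogonal complement at step $i$ has dimension exactly $D-i+1$ almost surely (the $m_j$, $j<i$, being a.s. linearly independent). Equivalently, one could prove the formula by induction on $d$, conditioning on the last vector and performing an orthogonal change of basis that sends it to a coordinate axis; this yields exactly the same telescoping recursion.
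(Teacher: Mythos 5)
Your proof is correct and follows essentially the same route as the paper: both interpret $\Delta(M)$ as the volume of the parallelotope spanned by the $d$ Gaussian rows, factor it via Gram--Schmidt into successive orthogonal distances, identify each distance as an independent $\chi_{D-j+1}$ variable by rotational invariance, and telescope the resulting product of Gamma ratios. No substantive difference.
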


\begin{proof}
 Let us write $M^\top=[R_1, \ldots, R_d]$.
  We use  an interpretation of  $  \Delta(M)$ as  the $d$-volume of the parallelotope generated by $ R_1,\ldots,R_d$: 
  $$  |\Delta( M)|=   \|R_1\| \prod_{j=2}^d \rm{dist}(R_j,S_{j-1}),
$$
with $ S_{j-1} = \rm{Span}( R_1,\ldots, R_{j-1})$. 
 Because of the invariance properties $\rm{dist}(R_j,S_{j-1})$, is distributed as a $\chi(D-j+1) $  with expectation
   $\sqrt{2} \frac{ \Gamma\left(\frac{D-j+2}{2}\right)}{
     \Gamma\left(\frac{D-j+1}{2}\right)}$ and all these variables are independent.  This  yields 
 \begin{equation} \label{e:det:mat}
   \E  \big(\Delta (M)\big)  =  2^{d/2}  \frac{\Gamma
   \left(\frac{D+1}{2}\right)}{\Gamma\left(\frac{D-d+1}{2}\right)}=  L_{D,d}.
 \end{equation}
 \end{proof}


   \begin{prop}\label{prop:intgeo}
  Let $M\in\R^{d\times D}$. 
  Then,
  \begin{equation}\label{e:croft:M}
     \sqrt{\left|\det MM^\top \right|}= c_{D,D-d}\cdot
     \int_{V\in \grass{D}{d}}|\det(M (\pi_V)^\top)| d\grass{D}{d}(V).
     \end{equation}
   \end{prop}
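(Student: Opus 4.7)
The strategy is to reduce the identity to a canonical form via invariance and the SVD, and then to pin down the remaining universal constant by a Gaussian averaging using Lemma \ref{l: det:mario}.

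First I would verify that both sides of \eqref{e:croft:M} are invariant under the action $M \mapsto UMW$ for $U \in O(d), W \in O(D)$, and homogeneous of degree $d$ in $M$. For the LHS this is immediate. For the RHS, $|\det(UM\pi_V^\top)| = |\det M\pi_V^\top|$ is clear; for the right action I would observe that $W\pi_V^\top$ and $\pi_{WV}^\top$ are both $D\times d$ matrices with orthonormal columns spanning $WV$, so they differ by a right factor in $O(d)$. Hence $|\det MW\pi_V^\top| = |\det M\pi_{WV}^\top|$, and the change of variable $V' = WV$ together with the Haar invariance of $d\grass{D}{d}$ identifies the two integrals.

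By the SVD I may then assume $M = [\Sigma,\,0]$ with $\Sigma = \diag(\sigma_1,\ldots,\sigma_d)$, so that $\sqrt{|\det MM^\top|} = \prod_i \sigma_i$. Writing $\pi_W^\top$ in block form $\binom{A(W)}{B(W)}$ with $A(W) \in \R^{d\times d}$, one obtains $M\pi_W^\top = \Sigma A(W)$, hence $|\det M\pi_W^\top| = \bigl(\prod_i \sigma_i\bigr)\,|\det A(W)|$. Since $A(W)$ depends only on $W$, the proposition reduces to the universal scalar identity
\[
 c_{D,D-d}\, K = 1, \qquad K := \int_{W \in \grass{D}{d}} |\det A(W)|\, d\grass{D}{d}(W).
\]

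To evaluate $K$, I would apply the normal-form relation $\int_W |\det M\pi_W^\top|\,dW = K\sqrt{|\det MM^\top|}$, now valid for every $M$ by the previous step, to $M = N$ a standard $d\times D$ Gaussian matrix, and take expectations. Since $\pi_W^\top$ has orthonormal columns, $N\pi_W^\top$ is a $d\times d$ standard Gaussian matrix for every fixed $W$, so Fubini and Lemma \ref{l: det:mario} give $L_{d,d} = K\,L_{D,d}$, i.e., $K = L_{d,d}/L_{D,d}$. Plugging in the explicit values of $L_{D,d}$, $L_{d,d}$, and $c_{D,D-d}$ in terms of Gamma functions, $c_{D,D-d} K = 1$ follows by a direct cancellation.

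I do not foresee a serious obstacle. The SVD reduction and Gaussian averaging are standard; the only mild care is that the orthonormal basis of $V \in \grass{D}{d}$ implicit in the definition of $\pi_V^\top$ is not canonical, but working with $|\det \cdot|$ throughout neutralises this ambiguity, since a change of basis in $V$ multiplies $\pi_V^\top$ on the right by an element of $O(d)$.
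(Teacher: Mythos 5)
Your proposal is correct and follows essentially the same route as the paper: reduction to a diagonal (then scalar) normal form via the singular value decomposition and the orthogonal invariance of the Haar measure, followed by determination of the universal constant by averaging over a standard Gaussian matrix and applying Lemma \ref{l: det:mario} twice (once in $\R^D$, once in the $d$-dimensional subspace $V$). Your explicit block computation $|\det(M\pi_V^\top)| = \bigl(\prod_i \sigma_i\bigr)\,|\det A(V)|$ merely spells out what the paper calls ``the linearity of the determinant,'' and your remark on the non-canonical choice of orthonormal basis for $\pi_V$ is a correct, if minor, point of care.
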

   
   \begin{proof}
   Using singular value decomposition and   invariance by isometry of the Haar measure and the determinant,  we can assume, without loss of generality, that 
$$
   M =\begin{pmatrix}
     \lambda_1 &  & &\\
     & \ddots & &0_{d\times(D-d)}\\
      &  &\lambda_d &
   \end{pmatrix}   
   $$
   where  the  $\lambda_i$'s are the singular values of $M$. 
   Using  the  linearity  of the determinant, we can even  assume that 
     $M = \begin{pmatrix}
       I_d & 0_{d\times(D-d)}
     \end{pmatrix} $ so that \eqref{e:croft:M} is proved up to the multiplicative constant.  To determine it, we use  the particular case of a standard Gaussian matrix $M$. 
   
   Concerning the left-hand side, Lemma \ref{l: det:mario} implies directly that 
   $$ \E (\Delta(M)) = L_{D,d}.$$
   
    As for the second,   the rows of $  M (\pi_V)^\top $  are the columns of $   \pi_V M^\top$, and they consist of $d$  independent standard Gaussian  vectors in $V$ .  Applying  Lemma \ref{l: det:mario}  in $V$ again, we get 
    $$
    \int_{V\in \grass{D}{d}}|\det(M (\pi_V)^\top)| d\grass{D}{d}(V) = L_{d,d} = 
     \frac{\Gamma \left(\frac{d+1}{2}\right)}{\sqrt{\pi}},
   $$ 
    giving 
    $$
     c_{D,D-d} = \pi^{1/2}  \frac{\Gamma
     \left(\frac{D+1}{2}\right)}{\Gamma \left(\frac{D-d+1}{2}\right)
     \Gamma\left(\frac{d+1}{2}\right)},
$$
which agrees with (\ref{constante-Crofton}).
\end{proof}

\section{Some extensions and applications}

Theorem 1 has many extensions. We can consider  (a)  the ``non-flat
case'', where the  parameter set  $T$  is now a $D$-dimensional manifold,
such as a sphere; (b) the computation of higher moments; and
(c)  the addition of weights to the roots. 
Naturally, all combinations of (a), (b), and (c) can also be explored. 
We provide a non-exhaustive list of statements in the appendix.

Here, because  we need  it in the examples, we consider the  case of
weights (c).  Suppose we want to compute  an integral over  the
level set  with a  non-negative weight $g(t,Z(\cdot))$, which depends on the location $t$  and some extra random field $ Z(\cdot)$. 
We will consider the following:
 \begin{equation} \label{e:g} G_u(X,B)  =  \int _{t\in T: X(t)
=u} g(t,Z(\cdot)) d \sigma_{D-d} (t).   \end{equation} 
What types of weights are admissible? Let us provide two examples:
$\bullet$ Up-crossings. Let $ D=d=1$, $Z(t) = X'(t)$, $g(t,Z(\cdot)) =
\ind_{Z(t)>0}$. 

$\bullet$ Critical points  with a given index: Let  $Y(t)$, where $t\in T
\subset \R^d$, be a $C^2$ random field with real values. We  define
$X(\cdot)$ as its gradient. The critical points of $Y$ are the zeros of
$X(\cdot)$.  We define the index  $i(M)$ of a matrix as the number of its
negative eigenvalues. Among the critical values  mentioned, we may want to
count  only those  associated   with a Hessian of  index $k$. In this
case, we need to define: $Z(\cdot)=X'(\cdot)=Y''(\cdot)$,
$g(t,Z(\cdot))=\ind_{i(Z(t))=k}$. 
\bigskip
  
  We assume the following:  
  \begin{itemize} 
  \item[a)]  $g$ is  lower semi-continuous as a function of $t$;       
   \item[b)]  $g$ is   lower semi-continuous as a function of $Z(\cdot)$ for the  weak topology   on the space  $C^0(T)$  of continuous functions;
   \item[c)] for {\bf every} $v \in N$ and every $t\in T$, the distribution  of $\{ Z(\cdot)\}$ conditional to  $X(t) =v$  is well defined as a probability  and is continuous, as a function of $v$ with respect to 
   the weak topology on the space  $C^1(T)$.
 \end{itemize}
   
\begin{thm} [Expected integral  on the level set]\label{t:w:dD}
 Let $X : T  \to \R^d$ be a random field  satisfying the hypotheses of
      Theorem \ref{t:krf:dD}. To each point $t$  of the parametric
      space, we associate  a weight $ g( t,Z(\cdot)) $, where  the
      function $g$ satisfies $a),\ b),\ c)$.
  Then,   \begin{equation} \label{e:poids} 
    \E \big( G_u(X,B) \big)  =\int_{B}\E\left( \Delta(t) g(t,Z(\cdot))
    \s X(t)=u\right) ~p_{X(t)}(u)dt,
 \end{equation}
 where $G_u(X,B)$  is defined in \eqref{e:g}.

\end{thm}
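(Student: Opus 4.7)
The plan is to parallel the proof of Theorem \ref{t:krf:dD}, carrying the weight $g(t,Z(\cdot))$ through every step. I first handle the case $D=d$; the reduction from $D>d$ to $D=d$ via Crofton's formula (Theorem \ref{t:crofton2} and Proposition \ref{prop:intgeo}) goes through unchanged, since the weight enters as a multiplicative factor attached to each root in the fibre integrals and commutes with the averaging over affine slices.

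Before attacking the tapering step, I would reduce to the case of continuous bounded $g$. Under hypotheses a) and b), any non-negative lower semi-continuous $g$ on the (metrisable) product space $T\times C(T,\R^{\text{target}})$ is the pointwise supremum of an increasing sequence $(g_m)$ of non-negative continuous bounded functions of the same arguments. Applying the formula to each $g_m$ and using monotone convergence on both sides of \eqref{e:poids} reduces the statement to continuous bounded $g$.

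For continuous bounded $g$, I re-run the bounding-and-tapering argument of Section \ref{sec:proofmain}, replacing the tapered counting variable by
$$C^n_v(R) := \sum_{s\in R :\, X(s)=v} F_n(\Delta(s)) G_n(\Delta(s)) F_n(\mathrm{dist}(s))\, g(s,Z(\cdot))$$
and setting $Q^n_v(R) := C^n_v(R)\,G_n(C^n_v(R))$. Applying the area formula to the test integrand
$$h(t,v) = F_n(\Delta(t))\,G_n(\Delta(t))\,F_n(\mathrm{dist}(t))\,g(t,Z(\cdot))\,G_n(C^n_v(R))\,\tilde g(v)$$
for an arbitrary continuous compactly supported $\tilde g$, taking expectations and using Fubini, yields the analogue of \eqref{e:ppu} with the factor $g(t,Z(\cdot))$ inserted, valid for almost every $v$.

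The main obstacle is upgrading this a.e. identity to one valid at the prescribed level $v=u$, which demands continuity in $v$ of both sides. For the left-hand side I adapt Proposition \ref{p:poly}: the tapering eliminates all contributions from the boundary and from near-irregular roots, so the surviving roots depend continuously on $v$ via the inverse function theorem; combined with continuity of $g$ in its first argument and the uniform bound $Q^n_v\leq n$, dominated convergence yields continuity of $v\mapsto\E(Q^n_v(R))$. For the right-hand side, condition c) together with the continuity of $g$ in $Z(\cdot)$ gives continuity of the conditional expectation in $v$, while continuity of $p_{X(t)}$ from hypothesis (ii) of Theorem \ref{t:krf:dD} closes the argument. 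Finally, passing to the limit $n\to\infty$ by monotone convergence, and invoking Proposition \ref{p:buli} to discard the $\sigma_{D-d}$-negligible irregular part of $\lev_u$, delivers \eqref{e:poids} on every open rectangle and, by the standard Borel extension argument, on every Borel subset of $T$.
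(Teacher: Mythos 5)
Your proposal is correct and follows essentially the same route as the paper: reduce to continuous bounded $g$ by writing the lower semi-continuous weight as an increasing limit of continuous functions and using monotone convergence, then re-run the bounding-and-tapering argument of Theorem \ref{t:krf:dD} with the weight inserted into $C^n_v(R)$, using conditions a)--c) to preserve the continuity in $v$ of both sides of the analogue of \eqref{e:ppu}. The paper's own proof is only a two-sentence sketch of exactly this argument, so your write-up is simply a more detailed version of it.
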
 
  
\begin{rem} In the case  $X(\cdot) , Z(\cdot)$ are jointly
non-degenerate Gaussian, condition $c)$ is always satisfied. Admissible functions $g(\cdot)$ are often
indicator functions of an open set.
\end{rem}
\begin{proof} Since the function $g(\cdot) $ is a monotone limit  of a
sequence of continuous function, we can use a monotone convergence argument to assume that $g(\cdot) $ is  continuous.  Thus, the
conditions above ensure the continuity of both sides of  the formula.
\eqref{e:ppu}.\end{proof}

\section{Examples}\label{sec:examples}

\subsection{Critical points of Gaussian random fields } \label{s:g:stat}


Let   $X(\cdot) $ be a Gaussian random field  with $C^2$ paths  from
$T\subset \R^d \to \R$,  such that  for every $t \in T$,  the
variance-covariance matrix  $\Var(X'(t))$ is non-degenerate. Then, by
Proposition \ref{p:buli}, the sample paths are almost surely Morse  in
the sense that  $$  \P\{ \exists t \in T : X'(t) =0,  \det (X''(t) ) =0
\} =0.$$ So, we can define the modified (not considering boundaries)
Euler characteristic $\Phi:= \sum _{k=0}^d  (-1)^{d-k}\crit_{k,u} $,
where $\crit_{k,u}$ represents the critical points greater than $u$ of
index $k$ (see  \cite[lemma 11.7.11]{AT} and \cite{Es:Le}). When the
random field is Morse,  this quantity is equal, up to  boundary
problems,  to the  Euler characteristic of the excursion set:
$$ E_u :=
\{t \in T :  X(t) >u\}.  $$

Then, an easy  application of Theorem  \ref{t:w:dD} gives 
   $$ \E(\Phi) = (-1)^d\int_u^{+\infty} dx \int_T dt \ \E(\det(X''(t))\s
   X'(t) =0,X(t)=x) p_{(X'(t),X(t))}(0,x).$$

\subsection{ Function of Gaussian processes}
Before stating  our result, we need to establish  two lemmas.

\medskip

Let $Y \in \R^D$ be a random variable that admits a density $p(y)$ w.r.t.
   the Lebesgue measure. Let $f:\R^D \to \R^d$, $d \leq D$, be a $C^1$
   function. Define  $U= f(Y)$   and let $R_U$  and $I_U$  denote the sets
   of regular and  irregular values of $f(\cdot)$, respectively:
\begin{align*}
 I_U &:=  \{u\in \R^d : \exists y \in \R^D; u=f(y);\; \Delta(y) =0\},\\
R_U & :=  \{u\in \R^d : \exists y \in \R^D; u=f(y);\; \Delta(y) \neq 0\},
\end{align*}
where $\Delta(y) := \det\Big( f'(y) \big( f'(y) \big)^T\Big)$.
Note that $ I_U $ and $ R_U $ may have some intersection.

 \begin{lem} [Push-forward  measure]  \label{p:push-forward:g}  
The push-forward distribution, representing the distribution of $U= f(Y)$, is
   the sum of a measure supported on the set $I_U$  and  an
   absolutely continuous distribution on the set $R_U$ of regular values
   of $f$ with density 
\[h(u) =  \int _{y\in f ^{-1}(u)} \frac{ p(y)}{ \Delta (y)} d\sigma_{D-d} (y).  
\]
In particular, if  $\P \{\Delta(Y)  =0\} =0 $, then the distribution  of $U$ is absolutely continuous, with density given by the formula above.
\end{lem}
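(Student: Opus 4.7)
The plan is to test the law of $U=f(Y)$ against an arbitrary non-negative Borel function $g$ on $\R^d$ and split the resulting integral according to whether $\Delta(y)$ vanishes or not. Writing
\[
\E\, g(U) = \int_{\R^D} g(f(y))\, p(y)\, dy = I_1 + I_2,
\]
with $I_1$ the contribution of $\{\Delta\neq 0\}$ and $I_2$ that of $\{\Delta=0\}$, it is enough to identify each term as the integral of $g$ against a measure of the announced type, and then invoke duality in $g$.

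For $I_1$ the cosmetic rewriting $p(y)=(p(y)/\Delta(y))\cdot\Delta(y)$ is valid on $\{\Delta\neq 0\}$, so I would apply the co-area formula (\cite[Proposition 6.13]{AW-book}) to the non-negative integrand $\phi(y):=g(f(y))\,p(y)/\Delta(y)\cdot\ind_{\{\Delta(y)\neq 0\}}$ and obtain
\[
I_1=\int_{\R^d} g(u)\,h(u)\,du, \qquad h(u):=\int_{f^{-1}(u)\cap\{\Delta\neq 0\}} \frac{p(y)}{\Delta(y)}\,d\sigma_{D-d}(y).
\]
This identifies $I_1$ with the integral of $g$ against the absolutely continuous measure $h(u)\,du$. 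Since $h(u)>0$ can occur only if $f^{-1}(u)$ contains a regular point of $f$, this density is supported on $R_U$, as claimed.

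For $I_2$, let $\mu_I$ denote the image measure of the non-negative measure $p(y)\,\ind_{\{\Delta(y)=0\}}\,dy$ under $f$; by construction $\mu_I$ is concentrated on $f(\{\Delta=0\})\subseteq I_U$, and $I_2=\int g\,d\mu_I$. Taking $g$ arbitrary yields the desired decomposition of the law of $U$ into the absolutely continuous piece $h(u)\,du$ on $R_U$ and the singular piece $\mu_I$ on $I_U$. The final assertion is then immediate: if $\P\{\Delta(Y)=0\}=0$, then $\mu_I$ is the zero measure, so the law of $U$ is precisely $h(u)\,du$. The only step needing a bit of care is the non-negativity hypothesis required to apply the co-area formula, and the understanding that the fiber integral defining $h(u)$ is to be taken on the regular part of $f^{-1}(u)$ (on the irregular part the integrand is formally $\infty/\infty$, but that part does not contribute to $I_1$); no genuine obstacle is anticipated.
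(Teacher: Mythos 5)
Your proposal is correct and follows essentially the same route as the paper: both split the law of $Y$ (equivalently, the integral of $g(f(Y))$) according to $\{\Delta=0\}$ versus $\{\Delta\neq 0\}$, push the singular part forward onto $I_U$, and apply the co-area formula with the weight $p(y)/\Delta(y)$ on the regular part to identify the density $h$. Your explicit restriction of the fiber integral to $f^{-1}(u)\cap\{\Delta\neq 0\}$ is a slightly more careful rendering of what the paper leaves implicit, but the argument is the same.
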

 The proof of this lemma is provided in  the appendix.

Note that  if $ f$ is $ C^{D-d+1}$, the Morse-Sard theorem (see
Hirsch \cite{Hirsch},  for example) implies that  $I_U$  has zero Lebesgue
measure. 
 \begin{lem} [Continuity  of integrals on the level set] \label{l:contour} 
   Let $K$ be a compact set in $ \R^D $, and let $H : K \to \R^d$, $D\geq d$ be a
   $C^1$ function. Let $g : \R^D \to \R^d$ be a continuous function,   and let
   $W$ be an open set in $\R^d$. Suppose that  for every $y \in W$
\[ \big\{ x \in H^{-1}(y) : \det(H'(x) H'(x)^\top )  =0\big\} =\emptyset ,\]
where  $ H^{-1}(y) := \{x \in K : H(x) = y\} $.

Then,   

\[ y \to \int_{ H^{-1}(y)}   g(x) d\sigma_{D-d}(x) \mbox{ is continuous on } W.
\]
\end{lem}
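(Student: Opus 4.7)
The plan is to localize around each fixed $y_0 \in W$, use the implicit function theorem to parametrize nearby level sets as graphs, and then apply dominated convergence. To set up, I first establish uniform regularity near $L_0 := H^{-1}(y_0) \cap K$: since $L_0$ is compact and $\Delta > 0$ on it, continuity of $\Delta$ and compactness yield a $\delta > 0$ and an open neighborhood $N \subset K$ of $L_0$ on which $\Delta \geq \delta$. A routine argument by contradiction, using that $K \setminus N$ is compact and disjoint from $H^{-1}(y_0)$, produces an open neighborhood $V_0 \subset W$ of $y_0$ such that $H^{-1}(V_0) \cap K \subset N$.

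Next, for each $x_0 \in L_0$ the condition $\Delta(x_0) \neq 0$ means $H'(x_0)$ has rank $d$; after permuting coordinates, I split $\R^D = \R^d_u \times \R^{D-d}_v$ so that $\partial_u H(x_0)$ is invertible. The implicit function theorem delivers a product neighborhood $U_{x_0} = W_u \times V_v$ of $x_0$ and a $C^1$ map $\phi$ defined on a neighborhood of $(y_0, v_0)$ such that $H^{-1}(y) \cap U_{x_0} = \{(\phi(y,v), v) : v \in V_v\}$ for $y$ close to $y_0$. The $(D-d)$-dimensional Hausdorff measure on this graph has density
\[
J_\phi(y,v) = \sqrt{\det\bigl(I_{D-d} + (\partial_v \phi(y,v))^\top (\partial_v \phi(y,v))\bigr)}
\]
with respect to $dv$, and $\phi$ and $J_\phi$ are jointly continuous. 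Covering the compact set $L_0$ by finitely many such charts $U_1, \ldots, U_N$ and shrinking $V_0$ to a common open neighborhood $V$ of $y_0$, I arrange that each $\phi_i$ is defined on $V \times V_v^i$, takes values in $W_u^i$, and that $H^{-1}(V) \cap K \subset \bigcup_i U_i$.

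Finally, pick a continuous partition of unity $\{\chi_i\}$ subordinate to $\{U_i\}$ on $\bigcup_i U_i$. For $y \in V$,
\[
\int_{H^{-1}(y)} g(x)\, d\sigma_{D-d}(x) = \sum_{i=1}^N \int_{V_v^i} \chi_i(\phi_i(y,v), v)\, g(\phi_i(y,v), v)\, J_{\phi_i}(y,v)\, dv,
\]
and each integrand is continuous in $(y,v)$, supported in a fixed compact subset of $V_v^i$, and uniformly bounded (using continuity of $g$, $\phi_i$, $J_{\phi_i}$ on compact sets). The dominated convergence theorem delivers continuity at $y_0$ of each summand, hence of the whole integral. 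Since $y_0 \in W$ was arbitrary, the result follows.

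The main obstacle is really the bookkeeping in the localization: the neighborhood $V$ must be shrunk enough so that three conditions hold simultaneously --- confinement of level sets to $\bigcup_i U_i$, good definition of the $\phi_i$ on $V \times V_v^i$, and $\phi_i(V \times V_v^i) \subset W_u^i$. Each requirement is open and contains $y_0$, so a finite intersection suffices, and after that the argument reduces to dominated convergence.
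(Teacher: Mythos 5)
The paper itself gives no proof of this lemma --- it defers to Caba\~{n}a's note --- so there is no internal argument to compare against. Your strategy (localize at $y_0$, parametrize the nearby level sets as graphs via the implicit function theorem with surface density $\sqrt{\det\bigl(I_{D-d}+(\partial_v\phi)^\top\partial_v\phi\bigr)}$, glue with a partition of unity, and conclude by dominated convergence) is the natural self-contained route, and the bookkeeping you describe --- the three open conditions defining $V$, the fixed compact $v$-supports extracted from $\mathrm{supp}\,\chi_i$, the joint continuity of $\phi_i$ and $J_{\phi_i}$ --- all goes through.

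There is, however, one step that fails in the stated generality: the identity
\[
\int_{H^{-1}(y)} g\,d\sigma_{D-d}=\sum_{i=1}^N\int_{V_v^i}\chi_i(\phi_i(y,v),v)\,g(\phi_i(y,v),v)\,J_{\phi_i}(y,v)\,dv
\]
requires that $H^{-1}(y)\cap U_i$ be the \emph{entire} graph $\{(\phi_i(y,v),v):v\in V_v^i\}$. But $H$ is only defined on the compact set $K$ and $H^{-1}(y)$ is by definition a subset of $K$; the implicit function theorem (applied to a $C^1$ extension of $H$) produces graphs that may leave $K$, in which case the right-hand side no longer computes the measure of the truncated level set. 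This is not a removable defect of your write-up: the lemma as literally stated is false. Take $K=[0,1]^2\subset\R^2$, $H(x_1,x_2)=x_2$, $g\equiv 1$, $W=\R$; then $\Delta\equiv 1$, so the hypothesis holds for every $y$, yet $y\mapsto\sigma_1\bigl(H^{-1}(y)\bigr)$ equals $1$ on $[0,1]$ and $0$ outside, hence is discontinuous at $y=0$ and $y=1$. Your argument (and the lemma) is correct once one adds the hypothesis that for $y$ in a neighborhood of $y_0$ the level sets $H^{-1}(y)$ stay away from $\partial K$ --- which is what happens in all the applications in the paper, where the level set is a compact level set of a globally defined map sitting in the interior of a large compact $K$. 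You should make that hypothesis explicit and verify it at the point where you identify $H^{-1}(y)\cap U_i$ with the full graph.
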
  
For the proof, see Caba\~{n}a  \cite{cabana-web}.

\begin{proof}[Proof of Theorem \ref{c:foncgauss}]

Let us verify  the conditions of Theorem \ref{t:krf:dD}. Condition $(i)$ is clear.

 To prove the validity  of the Kac-Rice formula  for
  every Borel set $B$ contained in $T$,  it suffices to establish
  \eqref {e:krf:dD} for  every  compact subset $K \subset T$.
  Lemma  \ref{p:push-forward:g} implies that, for all $t\in T$,
  the density of   $ X(t)$ at $u$  is given by
  $$ p_{X(t)}( u)=\int_{ y\in H^{-1}
  (u)}    \frac{  \phi_t (y)} { \sqrt{ \det(H'(y) H'(y)^\top )}} \
  d\sigma_{n-d}(y),$$ 
 where  the preimage  is restricted to $K$. Here, $\phi_t(y)$ is the
  Gaussian density of $Y(t)$.  Since $H^{-1}(u)$ is compact, the determinant
  $\det(H'(\cdot) H'(\cdot)^\top )$ is lower bounded on this set.  By
  applying Lemma \ref{l:contour},  the  density $ p_{X(t)}(v) $ is
  continuous and bounded, thus condition $(ii)$ of Theorem  \ref{t:krf:dD} holds.


%
%
%

We now consider hypothesis $(iii)$. Let
  $\mathcal F$  be a bounded continuous  functional  on $C^1(K)$. Since
  such a functional can be expressed as the difference of two non-negative
  functionals, we restrict our attention  to the case  of a non-negative
  functional.
  Our aim is to prove that 
$$\E ( \mathcal F(X(\cdot))\s X(t)=v) \mbox{ is continuous as a function of } v.$$


  We know that the conditional expectation $   \E(\mathcal F(X(\cdot)) \s Y(t)  =y) $ is well-defined,  and that 
$$\E(\mathcal F)  = \int_{\R^n}    \E(\mathcal F(X(\cdot)) \s Y(t)=y) p_{Y(t)} (y)  dy.
$$

    Using the generalized change of variable $H(y)=x$ (i.e., the co-area formula),
$$ \E ( \mathcal F)  = \int_{\R^d} dx \int_{ H^{-1}(x)}    \frac{  \E(\mathcal F(X(\cdot)) \s Y(t) = y)} { \Delta(y) }  p_{Y(t)} (y) \sigma_{n-d}(dy), $$
where $\Delta(y) := \sqrt{ \det(H'(y) H'(y)^\top )}$.

 Consequently,  we define a version of the conditional expectation as:
 $$ \E (\mathcal F(X(\cdot))\s  X(t) = x )  := \frac {1} {  p_{X(t)}(x)} \int_{ H^{-1} (x)}    \frac{  \E(\mathcal F(X(\cdot)) \s Y(t) = y)} { \Delta(y) }    p_{Y(t)} (y) \sigma_{n-d}(dy).
$$

 Note that this integral is always well-defined. By assumption,  $ \E(\mathcal F(X(\cdot)) \s Y(t) = y)$ is continuous.  By $(c)$,
  $\Delta(y)$ is continuous  and    non-zero almost everywhere. Applying  Lemma \ref{l:contour}, we conclude that the mapping $$x \to \int_{ H^{-1}(x)}
  \frac{  \E(\mathcal F(X(\cdot)) \s Y(t) = y)} { \Delta(y) }    p_{Y(t)} (y)
  \sigma_{n-d}(dy)$$ is continuous.   
\end{proof}

The  classical example is, of course, the case  of $\chi^2$  processes:
$$  X(t)  = \|Y(t)\|^2,$$ with $\Var(Y(t) )= I_n$ . Conditions a) and b) are straightforward; as for c) 
  $$
  H'(y) = 2(y_1,\ldots, y_n); \quad   \det(H'(y) H'(y)^\top ) = 4 \|y\|^2 = 4v.
  $$
  Therefore, excluding the trivial case $u=0$, the hypothesis is satisfied.

     In the proof of Corollary \ref{c:foncgauss}, the Gaussianity  of $Y(\cdot)$ is not strictly necessary; all that is required is the regularity of the density of $Y(t)$ and the regularity of the conditional expectation $ \E(\mathcal F \s Y(t) = y)$.  The result can be generalized, as in the next section. 

\subsection{Sum of a parametrized function}

In this section, we consider  the following general framework:
$$
X(t)  =  \sum_{i=1}^n  L(Y_i,t) := H_t(\mathbf Y),  \quad \mbox{ with } \mathbf Y := (Y_1,\ldots,Y_n ),
$$
with 
\begin{itemize}
\item $t \in T \subset  \R^D$;
\item  the $Y_i$'s are i.i.d. random variables in $\R^m$  with density $f(\cdot)$;
\item  $L(\cdot) $ is  a function taking values in $\R^d$, $d \leq D$, which is   $C^1$ as a function of $t$ and $Y$;
\item $nm\geq D$.
\end{itemize}

\begin{thm} \label{p:sum}
With  the notation and assumptions above, assume that for every $t\in T$,
  $v\in \R^d$, the irregular set 
 $$ \{ \bY \in   H_t^{-1} (v)  :  H_t'(\bY) \mbox{ is not of full rank }\} ,   $$
is almost surely empty.
 Then, the conditions of Theorem \ref{t:krf:dD} are fulfilled, and the
  \KRF~holds for $u\in \R^d$.
 
 \end{thm}


 \begin{proof}
  Let us verify  the hypotheses of  Theorem \ref{t:krf:dD}. 
   Condition $(i)$  is clear. 
   Condition $(ii)$ follows from the analysis of the density as shown in the proof of Theorem  \ref{c:foncgauss}. 
   It remains  to establish  condition $(iii)$. To achieve this, we will follow the lines of the proof of 
   Theorem \ref{c:foncgauss}. Fix a particular value of the parameter $t_0$, 
 and  consider a  bounded, continuous,  non-negative functional $\mathcal F$ on $C^1(T)$. We aim to define
   $\E (\mathcal F(X(\cdot)) \s  X(t_0) =v) $ as a continuous function of $v$.

   It is clear  that 
   $$
   \E(\mathcal F(X(\cdot))\s \bY) = \mathcal F( \mathcal Y),
   $$
   where 
    $ \mathcal Y$ is the function 
    $$   t \to H_t( \mathbf Y).
    $$
 This functional  is continuous  as a function of   $ \mathbf Y$ due to the differentiability  hypothesis. 
 We now  choose a version of the conditional expectation
     $$ \E ( \mathcal F(X(\cdot))\s  X(t_0)  = v)  := \frac {1} {  p_{X(t_0)}(v)}
     \int_{H^{-1} (v)}    \frac{  \E(\mathcal F(X(\cdot)) \s \mathbf Y = y)} {
       \Delta(y) }   p_{\mathbf Y} (y) \sigma_{n-d}(dy), $$
 where $H$ stands for $ H_{t_0}$ and $\Delta(y) $ represents its generalized
   Jacobian. 
      Since,  by hypothesis  $ \E(\mathcal F (X(\cdot))\s \mathbf Y = y)$ is continuous, applying  Lemma \ref{l:contour} gives us that,
   $$
   v \to \int_{H^{-1} (v)}    \frac{  \E(\mathcal F(X(\cdot)) \s  \mathbf Y = y)} { \Delta(y) }    p_{\mathbf Y} (y)\sigma_{n-d}(dy)
$$
   is continuous. Consequently, $\E ( \mathcal F(X(\cdot))\s  X(t_0)  = v) $ is a continuous function of $v$.   This proves $(iii) $ and achieves the proof:
 $$
   \E(\sigma_{D-d}( \lev_u ))=  \int_T \E(\Delta(t)\s  X(t) =u \big) 
  p_{ X(t)}(u) \ dt.
   $$
   \end{proof}

\subsection{Mean number  of  critical points of the likelihood}
 An example of a  situation corresponding to the framework above is the case of number of critical points of the likelihood function. 
 We consider  a parametric  statistical model. The observation   $\bY  := ( Y_1, \ldots, Y_n)$ consists of $n$ i.i.d.  observations $Y_i$ that  lie in $\R^m$.
 
 Let $L(Y,\theta) $ be the log-likelihood  associated  to the observation of  one $ Y \in \R^m$.
 We define
 $$
 L( \theta)  := \sum_{i=1}^n  L(Y_i,\theta).
 $$
 
 We assume that the model is parametric, meaning that $\theta$ varies in an open set  $\Theta$ of $\R^d$. Let us denote  the gradient of
 $L(\theta) \in \R^d$ by $L'(\theta)$:
 $$L'(\theta) = \frac{\partial}{\partial \theta }  L( \theta).
 $$
 
  We have 
  $$
   L'( \theta)  := \sum_{i=1}^n  L'(Y_i,\theta).
$$
Under the appropriated conditions, we can apply 
 Theorem \ref{p:sum} to  compute  the expectation  of the number $N$ of critical points of the likelihood. 
  $$
  N := \#\{ \theta \in \theta :  L'(\theta) =0 \}.
  $$

\subsection{Gravitational stochastic microlensing}
 
 This paragraph is mainly based on the paper of Petter et al.
 \cite{Pe2}. 
 Let us consider a gravitational potential $\psi:T\subset \mathbb R^2\to \mathbb R$,  and  define the transformation :
\begin{equation}
\eta(x)=x-\psi'(x).
\end{equation}
A lensed image is defined as  the point solution of the following equation
$\eta(x)=y.$ 

From now on, we will focus on  a type of lens system known as
stochastic microlensing. For the definition, let us consider a number $N$ of stars,
each located at a random position $\xi_j\in\R^2,\,j=1,\ldots,N.$ In this case, the
above quantities are  defined as follows: \begin{eqnarray*}
  \psi(x)&=&\kappa_c\frac{|x|^2}2-\frac{\gamma}2(x_1^2-x_2^2)+m\sum_{j=1}^N\log\|x-\xi_j\|
  \ ,\\ \eta(x)&=&(1-\kappa_c+\gamma)x-2m
  \sum_{i=1}^N\frac{x-\xi_i}{\|x-\xi_i\|^2}\ , \end{eqnarray*} where
  $x=(x_1,x_2)\in\R^2-\{\xi_1,\ldots,\xi_N\}$,  $\gamma$ is the external
  shear,  in continuous matter with constant density $\kappa_c\ge  0$,
  $m$ the mass of the stars and $N$ the number of stars, that remain
  fixed. We also use the condition  $(1-\kappa_c+\gamma)<0$, which is
  known as the supercritical case in the physical literature see
  \cite[pp. 434]{Pe1}. We assume that the positions of the stars are
  independent random vectors $\xi_j=(U_j,V_j)$   with  a density $f$ supported 
  on the ball $D=B_2(0,R)$.  The lensed image with $y$,  corresponding to the light
  source, is given by the equation $$\eta(x^*)=y\mbox{ which can be expressed as } 2m
  \sum_{i=1}^N\frac{x^*-\xi_i}{\|x^*-\xi_i\|^2}-(1-\kappa_c+\gamma)x^*=y.$$ 

We will consider the expectation of the number of images produced by  the source $y$ in a region $D\subset\R^2$,  which we will denote by $N(D,y)$.

To place this problem within the framework of the previous development,  we introduce the function $L(\mathbf\Xi,x): D\subset \mathbb R^2\to\mathbb R^2$ through the expression
$$
L(\mathbf\Xi,x)
=\sum_{i=1}^N\left[\frac{(1-\kappa_c+\gamma)x}N-2m\frac{x-\xi_i}{\|x-\xi_i\|^2}\right]\mbox{ where } \mathbf\Xi=(\xi_1,\ldots,\xi_m).
$$

The function $L(\mathbf\Xi,x)$ has $m$ singularities at the points $x=\xi_j$, but is a $C^{\infty}$ function
in $\R^2-\{\xi_1,\ldots,\xi_N\}$. 
Let $y\in\R^2$, and let $\mathbf V$ be a neighborhood of $y$.
Since we have $\lim_{x\to\xi_j}\|\eta(x)\|=+\infty$, all the roots
$\eta(x)=y'$ for $y'\in\mathbf V$ are located $\R^2-\{\xi_1,\ldots,\xi_N\}$.
Therefore, in the set $\eta^{-1}(\mathbf V)$, the function $ \eta$  is
$C^{\infty}$, which confirms condition $(i)$ in the localized version of Theorem
\ref{t:krf:dD}.

Let us consider the following push-forward formula:
$$p_{L(\mathbf\Xi,x)}(y)=\int_{L^{-1}(\mathbf\Xi,x)(y)}\frac1{\Delta(\mathbf\Xi,x)}\prod_{j=1}^Nf(\xi_j)d\xi_j \ ,
$$
where  
$$
\Delta(\mathbf\Xi,x)=|\det\frac{\partial L(\mathbf\Xi,x)}{\partial x}|.
$$ 

Hence, to verify conditions  $(ii)$ and  $(iii)$ of Theorem  \ref{t:krf:dD}, we
will utilize  Theorem \ref{p:sum}.  Let us examine the condition of this proposition. Specifically, we need to analyze
the behavior of $\Delta(\mathbf\Xi,x)$.
Hence,  we  first compute 
\begin{multline*}\frac{\partial L(\mathbf\Xi,x)}{\partial x}\\=(1-\kappa_c+\gamma)I+\begin{bmatrix}-2m\sum_{i=1}^N(\frac1{\|x-\xi_i\|^2}-2\frac{(x_1-\xi^1_i)^2}{\|x-\xi_i\|^4})&
2m\sum_{i=1}^N\frac{(x_1-\xi^1_i)(x_2-\xi^2_i)}{\|x-\xi_i\|^4}\\
2m\sum_{i=1}^N\frac{(x_1-\xi^1_i)(x_2-\xi^2_i)}{\|x-\xi_i\|^4}&-2m\sum_{i=1}^N(\frac1{\|x-\xi_i\|^2}-\frac{(x_2-\xi^2_i)^2}{\|x-\xi_i\|^4}) \end{bmatrix}.\end{multline*}
In this form, we obtain
$$\langle\frac{\partial L(\mathbf\Xi,x)}{\partial x}
v,v\rangle=\Big[(1-\kappa_c+\gamma)-2m\sum_{i=1}^N\frac1{\|x-\xi_i\|^2}]\|v\|^2+2m\sum_{i=1}^N\frac1{\|x-\xi_i\|^4}\langle
x-\xi_i,v\rangle^2$$
$$=[(1-\kappa_c+\gamma)-2m\sum_{i=1}^N\frac{1}{\|x-\xi_i\|^2}(1-\langle\frac{x-\xi_i}{\|x-\xi_i\|},\frac{v}{\|v\|}\rangle^2)\Big]\|v\|^2.$$
When $\|v\|=1$, we have $$\langle\frac{\partial
L(\mathbf\Xi,x)}{\partial x}
v,v,\rangle=[(1-\kappa_c+\gamma)-2m\sum_{i=1}^N\frac{1}{\|x-\xi_i\|^2}(1-\langle\frac{x-\xi_i}{\|x-\xi_i\|},v\rangle^2)].$$

Since  $(1-\kappa_c+\gamma)<0$, for any vector $v$, it holds that $\langle\frac{\partial L(\mathbf\Xi,x)}{\partial x} v,v\rangle<0$. Thus, in this case,
we get $\Delta(\mathbf\Xi,x)>0$. Therefore, conditions  $(ii)$  and $(iii)$ of Theorem  \ref{t:krf:dD}  hold true. 

The KRF writes in this case
\begin{equation}\label{ricelensing}\E(N(D,y))=\int_D\E[|\det\frac{\partial L(\mathbf\Xi,x)}{\partial
x}|\,|\eta(x)=y]f_{\eta(x)}(y)dx,\quad\mbox{for all } y.\end{equation}
\begin{rem} In Theorem 4 of \cite{Pe2}, a version of a similar formula for almost every $y $ has been proved. Our method allows us
to obtain, in the supercritical case, the formula \textbf{for all} $y$. We believe that the formula is true for all $y$ in the subcritical case; however, the preceding procedure cannot be applied.\end{rem}
\begin{rem}Although finding an exact expression for the formula \eqref{ricelensing} is computationally complex, the cited reference demonstrates its usefulness in evaluating the asymptotic value as $N\to\infty$.
\end{rem}

 \subsection{Shot-noise }
Following \cite{Bier:De}, the shot-noise process $X=\{X(t)\colon t\in\R^d\}$ is defined by the equation
\begin{equation}\label{eq:sn}
X(t)=\sum_i\beta_i g((t-\tau_i)),
\end{equation}
where the  $(\beta_i)$'s are  a sequence of i.i.d. random
real variables (the ``impulse") and $(\tau_i)_i$ is a Poisson field with  intensity (say)   1.

We assume  the following conditions:
\begin{itemize}
\item [(H1)]The kernel $g(\cdot)$  is  of class $C^1$   with  compact support included in $(-\eta,\eta)^d$ .
 \item[(H2)]  The impulse $\beta$  has a finite  first moment.
 \item[(H3)]  The product $\beta_1 g(\mathscr T)$  has a bounded density, where
   $\mathscr T$ is a random variable with uniform distribution over  $(-\eta,\eta)^d$.
 \end{itemize}

Let us consider a  fixed instant $t$.  Without loss of generality, we can assume that $t=0$.
  The number $\mathcal P$ of   indices $(i)$  in $ \eqref{eq:sn}$  follows a  Poisson  distribution with parameter $(2\eta)^d$   and is  thus almost surely  finite.
 Let $u \neq 0$,  and denote by $\mathcal M_u$  the measure of the level set defined as
 $$
 \mathcal M_u := \sigma _{d-1} \{ t\in T: X(t) =u\},
 $$
for some $T\subset \R^d$.
 
 We can use the law of total probability  to express the expectation of the measure as follows:
\begin{equation}\label{e:tp}
\E(\mathcal M_u) = \E(\mathcal M_u\s \mathcal P =1 ) \P\{ \mathcal P =1\}+\cdots+    \E(\mathcal M_u\s \mathcal P =p )   \P\{ \mathcal P =p\}+\cdots
\end{equation}
 By definition  of the Poisson process, given that $\mathcal P =p $,
 the $ \tau_i$'s are i.i.d.  with  a uniformly distributed over
 $(-\eta,\eta)^d$. Each term of the sum can be computed by means of
 Theorem \ref{p:sum}.

  Let us check the  condition  of this proposition.  Under the condition $\{ \mathcal P =p\}$, the matrix
 $  H'_t$   is a  $(2p,1)$ matrix,   which is of full rank unless all its elements are zero.  A sufficient condition to avoid this scenario is,  for all $t$:
 $$
 \P\{ g(t-\tau) \neq 0\} =1,
 $$
 where $\tau$ is uniformly distributed on $(-\eta,\eta)^d$.
 As a conclusion, Theorem \ref{p:sum}. can be applied conditionally
 to $ \mathcal P$.

 Finally   by de-conditioning,  we obtain that the  usual
 expression:  \eqref{e:krf:dD}  holds true
 for  every  $u \neq 0$  as long as  the  conditional expectation has been defined by all the process above.

\begin{appendix}
\section*{}
\subsection {Extensions of Theorem \ref{t:krf:dD}}
\subsubsection{Extension to manifolds} 
 In the present form, the  following  theorem has no equivalent in the literature.

   Let $M$  be a $C^1$  manifold  of dimension $D$ embedded in
   $\R^{D'}$, $D'>D$ with   the induced Riemannian structure. Let
   $X(\cdot)$  be a random field  defined on $M$ that takes values in
   $\R^d$ (or in another $d$ dimensional Riemannian manifold).   
   For a Borel set $B$  in $M$ and for $u\in \R^d$, we define the level set  as 
   $$
  \lev_u := \{t\in B :  X(t) =u\}.
   $$
   Roughly speaking, this set  is of dimension $D-d$,  making it natural to consider  the  Hausdorff measure: 
   $$
  \sigma_{D-d}( \lev_u).
  $$
   By employing a partition of the unity argument, 
   the ``weak'' Bulinskaya  property   can be localized  within the
   domain of a local  chart,  allowing us to extend Proposition \ref{p:buli} to our case. 
    Consequently, the level set remains almost surely rectifiable,  and we can apply the framework of the proof of  Theorem
     \ref{t:krf:dD}. The main modifications are as follows:
     \begin{itemize} 
     \item Using  the area formula for manifolds \cite{Morgan}. 
    \item  Replacing  the compact hyper-rectangle $R$  with  a compact set, that is the closure of its interior. 
    \end{itemize}
     Thus, we obtain the following result:
          \begin{thm} \label{t:krf:dm} The result (regarding the validity of the \KRF), as stated in
            Theorem \ref{t:krf:dD}, can be extended to   a random
            field defined on  a 
            $C^1$ manifold $U$ of dimension $D$  and taking
            values in a $C^1$ manifold of  dimension  $d$.
            Define the
            generalized Jacobian  by $$ \Delta(t) := \sqrt{\det(\nabla
            X(t) \nabla X(t) ^\top)}, $$ where $ \nabla X(t)$   is the
            Riemannian gradient, defined by  differentiation  along
            curves.          Then, for every Borel set $B$ included in
            $U$, one has \begin{equation} \label{e:krf:man}  \E
            \left(\sigma_{D-d}\left(\lev_u(B) \right) \right) =\int_{B}
            \E\left( \Delta(t) \s X(t)=u\right)\,p_{X(t)}(u)d\sigma(t),
            \end{equation} where $\sigma$ is the surface measure  on
            $U$.  \end{thm}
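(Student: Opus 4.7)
My plan is to reduce \eqref{e:krf:man} to the Euclidean statement Theorem \ref{t:krf:dD} by localizing to a single coordinate chart and re-running the proof of Section \ref{sec:proofmain} with Riemannian ingredients replacing the Euclidean ones.

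Cover $U$ by a countable atlas $\{(U_\alpha, \phi_\alpha)\}$ with $\phi_\alpha: U_\alpha \to V_\alpha \subset \R^D$. Since both sides of \eqref{e:krf:man} are countably additive Borel set functions of $B$, it suffices to prove the formula when $B \subset U_\alpha$. Fix such a chart, set $\tilde X := X \circ \phi_\alpha^{-1}: V_\alpha \to \R^d$, and let $g$ denote the pulled-back Riemannian metric. The volume form reads $d\sigma = \sqrt{\det g(t)}\, dt$, and the Riemannian Jacobian is
\[
\tilde\Delta(t) = \sqrt{\det\bigl(\partial \tilde X(t)\, g^{-1}(t)\, \partial \tilde X(t)^\top\bigr)}.
\]
Hypotheses (i)--(iii) of Theorem \ref{t:krf:dD} transfer to $\tilde X$ because $\phi_\alpha$ is a $C^1$ diffeomorphism (the density of $\tilde X(t)$ is pulled from that of $X(\phi_\alpha^{-1}(t))$ up to a smooth Jacobian factor, and the $C^1$-topology and conditional distributions are preserved).

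The heart of the proof is a faithful transcription of the argument of Section \ref{sec:proofmain}, with the modifications indicated in the theorem statement: the Euclidean area formula invoked in Lemma \ref{l:kac:d} and in the tapering identity \eqref{e:ppu} is replaced by its Riemannian counterpart on $(V_\alpha, g)$ (cf.\ Morgan \cite{Morgan}), introducing the factor $\sqrt{\det g(t)}$ against Lebesgue measure; the Euclidean distance in the tapering function $F_n$ is replaced by the $g$-geodesic distance to the boundary; and the compact hyper-rectangle $R$ is replaced by a compact set equal to the closure of its interior, exhausting $V_\alpha$ countably. Proposition \ref{p:buli} applies directly to $\tilde X$, and the vanishing of the $g$-Hausdorff measure of the irregular set follows from the Euclidean conclusion since the two Hausdorff measures are bi-Lipschitz equivalent on compact subsets of $V_\alpha$. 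Proposition \ref{p:poly} is diffeomorphism-invariant and so applies to $\tilde X$ unchanged. The bounding-and-tapering argument followed by monotone convergence then yields \eqref{e:krf:man} on the interior of each approximating compact, and a standard measure-extension argument finishes the proof.

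The step I expect to require the most care is the Riemannian analog of Lemma \ref{l:kac:d} for $D > d$, namely
\[
\sigma_{D-d}^g\bigl(\tilde\lev_u(K)\bigr) = \lim_{\delta \to 0}\frac{1}{\lambda_d(B_d(0,\delta))} \int_K \ind_{\{\|\tilde X(t) - u\| < \delta\}}\, \tilde\Delta(t)\, \sqrt{\det g(t)}\, dt,
\]
valid when the boundary of $K$ contains no roots and all roots in $K$ are regular. This follows from the Riemannian co-area formula on $(V_\alpha, g)$ applied to the $\delta$-tube around $\tilde\lev_u$ combined with dominated convergence, but it demands uniform control of $\tilde\Delta$ and of the tangent-plane geometry of the level set on compacts. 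This direct co-area route bypasses the Crofton-based reduction of Section \ref{sec:KRFD}, which would require a Riemannian version of Proposition \ref{prop:intgeo} with $g$-orthogonal projectors replacing $\pi_V$; doing so appears substantially more delicate, so the co-area strategy is preferable.
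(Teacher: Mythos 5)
Your localization strategy---reduce to a single chart by countable additivity, pull back the field and the metric, and re-run the Euclidean proof with the Riemannian area formula, the geodesic distance inside the tapering functions, and a compact set equal to the closure of its interior in place of the hyper-rectangle---is exactly the scheme the paper sketches (the paper phrases the localization via a partition of unity rather than additivity over charts, which comes to the same thing, and it also extends Proposition \ref{p:buli} chartwise just as you do). Where you genuinely diverge is in the codimension reduction for $D>d$: the paper handles $D>d$ in Theorem \ref{t:krf:dD} by the Crofton formula (Section \ref{sec:KRFD}), slicing the rectifiable level set by affine subspaces and invoking the $D=d$ case on each slice together with Proposition \ref{prop:intgeo}, and its manifold sketch keeps that scheme; you instead propose a direct co-area/tube-volume identity for $\sigma_{D-d}^g\bigl(\tilde\lev_u(K)\bigr)$ and run the bounding-and-tapering argument on level-set integrals rather than on root counts. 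Your route has the merit of avoiding the integral-geometric identity altogether (which, as you correctly observe, is not available intrinsically for a non-flat metric, the Haar measure on the Grassmannian being invariant only under Euclidean isometries), but it shifts the burden onto a step the paper's architecture never has to face: the a.s.\ continuity in $v$ of the tapered level-set functionals $Q_v^n$ when $D>d$. Proposition \ref{p:poly}, which you cite as applying unchanged, concerns only the \emph{number} of roots and gives nothing once the level sets are positive-dimensional; the continuity you need is supplied instead by Lemma \ref{l:contour}, applied on the compact region where the tapering keeps $\tilde\Delta$ bounded away from zero and the points away from the boundary, together with your limit identity. With that substitution made explicit your argument closes, and it is arguably cleaner than adapting Crofton to an embedded manifold; as written, however, the $D>d$ continuity step is asserted rather than proved, and the appeal to Proposition \ref{p:poly} there is a misattribution.
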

  Note that, in practice,  the computation of $\det( \nabla X(t))$ should be
  performed using an orthogonal basis of the tangent space at the point $t$.

\subsubsection{Higher moments} 
The formula for  higher moments has a specific form  when $D=d$ and the considered levels are all equals. Using the notation of  Theorem \ref{t:krf:dD},
let $D=d$ and  let $k$ be an integer $k\geq 2$. The \KRF~  of order $k$ provides
the $k$th {\bf factorial moment} of order $k$:
\begin{multline}\label{e:krf:d:k}
\E \left[   \big( N_u(X,B) \big) \big(N_u(X,B)-1 \big) \ldots \big(N_u(X,B)-k+1 \big)
\right] \\
 =
\int_{B^k}
\E_C\left( \Delta(t_1) \ldots \Delta(t_k) )\right) ~p_{(X(t_1),\ldots,X(t_k))}(u,\ldots,u)dt_{1}\ldots dt_{k},
\end{multline}
where $\E_C$ is the expectation with respect to the condition:
 $$C=\{X(t_1)=u,\ldots,X(t_k)=u\}.$$

In the  cases  $D>d$  or when  all the considered sets $B_1,\ldots,B_k$  are pairwise different, or when the considered level $u_1,\ldots,u_k$ are distinct (we omit more complicated alternatives)  the \KRF~of order $k$ can be expressed as follows: 
 \begin{multline} \label{e:krf:d:k2}
   \E \left(\sigma_{D-d}\left(\lev_{u_1}(B_1) \right) ,\ldots,
   \sigma_{D-d}\left(\lev_{u_k}(B_k)  \right) \right)
 \\=
 \int_{B_1\times \cdots \times B_k} \E_C\left( \Delta(t_1),\ldots,
 \Delta(t_k ) \right)\,p_{(X(t_1),\ldots, X(t_k))}(u_1\ldots,u_k )dt_1\ldots dt_k. \end{multline}

  To understand this change of  form, we
  define the measure $\mathcal  M$ on $T^k$  as follows: 
   $$
   \mathcal  M(B_1\times \cdots \times B_k)
   :=\E(\sigma_{D-d}\left(\lev_{u_1}(B_1) \right),\ldots,
   \sigma_{D-d}\left(\lev_{u_k}(B_k) \right).$$
   
   Then,  the \KRF~of order $k$ provides the density of the   absolutely
   continuous part  of the measure.   The measure contains a
   singular part in the case $D=d$, but not   in the other case.

 \begin{thm}[Kac-Rice formula for higher moments] \label{t:krf:d:k}
Let $X : T  \to \R^d$ be a random field, where $T$ is an open subset of
$\R^d$,  and let $u \in \R^d$ be a fixed point. Assume the following: \\
\noindent(i)  The sample paths of $X(\cdot)$  are almost surely  $C^{1}$.\\
\noindent (ii')  For every pairwise distinct  $t_1,\ldots,t_k\in T$, $(X(t_1),
   \ldots, X(t_k))$ admits a continuous density $ p_{X(t_1), \ldots,
   X(t_k)}(v_1,\ldots , v_k)$, which is uniformly  bounded for
   $(v_1,\ldots , v_k)\in (\R^d)^k$ and $t_1,\ldots,t_k$ in any compact
   subset of $T^k\setminus \mathscr{D} $, where $\mathscr D$ is the diagonal in $T^k$: $$
   \mathscr D= \{(t_1,\ldots,t_k)\ \in T^k: \exists i\neq j,t_i = t_j\}.  $$
\noindent   (iii')  For {\bf every} $u_1,\ldots , u_k $,    for every pairwise distinct
    $t_1,\ldots,t_k\in T$, the    distribution  of $\{ X(s), s
   \in T\}$ conditional to  $ X(t_1)=u_1,\ldots,X(t_k) =u_k$  is well-defined as a probability  and is continuous as a function of
   $u_1,\ldots , u_k $ with respect to the weak topology  on the space
   $C^1(T)$. See  Proposition \ref{p:poly}.

Then, the equation   \eqref{e:krf:d:k} or \eqref{e:krf:d:k2}
   holds true.

\end{thm}

\subsection{Proof of Lemma \ref{p:push-forward:g} }

\begin{proof} Define 
\begin{align*}
 I_f &:=  \{y\in \R^D :  \Delta(y) =0\}\\
  R_f & :=  {I_f}^c.
 \end{align*}
The distribution of $Y$ (and, consequently, that of $U$) can be divided  into  two components: 
\begin{itemize} 
 \item One component is supported on $I_f$ and induces for $U$ a distribution on the set $I_U$ of singular values of $f(.)$; 
 \item The other component is supported on $R_f$ and induces a distributionfor $U$ supported on $R_U$, the set of regular values of $f(.)$. 
\end {itemize}

We study the distribution of $U$ restricted to the set $R_U$ of regular values.
Let $g(\cdot)$ be a bounded Borel function defined on $\R^d$ and with support on $R_U$. Then, we have
\[ 
    \E (g(U))  = \E \big(g(f(Y) )  \big)  =  \int_{R_Y} g(f(y)) \,  p(y)\,  dy.
\]
    Applying  the co-area formula  (see for example \cite{azais2005})
     yields 
\begin{align*}
  \E (g(U)) &= \int_{u\in \R^d}  \int_{y\in f^{-1}(u)} \frac{g(f(y))
 \, p(y)}{\Delta(y)}\,  d\sigma_{D-d}(y)
\\
  &=   \int_{\R^d}   g(u)du \int _{y \in f^{-1}(u) }  \frac{
    p(y) } { \Delta(y)}  d\sigma_{D-d}(y),
\end{align*}
which gives the desired result.
\end {proof}

\end{appendix}

\begin{acks}[Acknowledgments]
The authors would like to thank Rafael Potrie for his helpful comments on the 
Bulinskaya-type lemma.  
They would also like to express their sincere
gratitude to the two anonymous Referees 
 for their valuable feedback, which has significantly improved the final version of the paper.

\end{acks}

\end{document}